\documentclass{amsart}
\usepackage{latexsym,amssymb,amsthm,amsmath}

\usepackage{amssymb}
\usepackage{amsmath}

\theoremstyle{plain}
\newtheorem{theorem}{Theorem}[section]
\newtheorem*{Theorem B}{Theorem B}
\newtheorem*{Theorem A}{Theorem A}
\newtheorem{lemma}{Lemma}[section]
\newtheorem{proposition}{Proposition}[section]
\newtheorem{corollary}{Corollary}[section]

\newtheorem{definition}{Definition}[section]
\newtheorem{example}{Example}[section]
\numberwithin{equation}{section}

\theoremstyle{remark}

\sloppy

\setcounter{page}{1}

\begin{document}
\title[Warped product skew CR-submanifolds of Kenmotsu manifolds]
{Another class of warped product skew CR-submanifolds of Kenmotsu manifolds }
\author[S. K. Hui, T. Pal and J. Roy]{Shyamal Kumar Hui$^{*}$, Tanumoy Pal and Joydeb Roy}
\subjclass[2010]{53C15, 53C40}
\keywords{Warped product, skew CR-submanifolds, Kenmotsu manifolds,\\
$*$ Corresponding author}
\begin{abstract}
Recently, Naghi et al. \cite{NAGHI} studied warped product skew CR-submanifold of the form $M_1\times_fM_\bot$ of order $1$ of a Kenmotsu manifold $\bar{M}$ such that $M_1=M_T\times M_\theta$, where $M_T$, $M_\bot$ and $M_\theta$ are invariant, anti-invariant and proper slant submanifolds of $\bar{M}$. The present paper deals with the study of warped product submanifolds by interchanging the two factors $M_T$ and $M_\bot$, i.e, the warped products of the form $M_2\times_fM_T$ such that $M_2=M_\bot\times M_\theta$. The existence of such warped product is ensured by an example and then we characterize such warped product submanifold. A lower bounds of the square norm of second fundamental form is derived with sharp relation, whose equality case is also considered.
\end{abstract}
\maketitle
\section{Introduction}
In 1986, Bejancu \cite{BEJ} introduced the notion of CR-Submanifolds. This family of submanifolds was generalized by Chen \cite{CHENS} as slant submanifolds. Then a more generalization is given as semi-slant submanifolds by Papaghiuc \cite{PAPA}. Next, Cabrerizo et al. \cite{CAR1} defined and studied bi-slant submamifolds and simultanously gave the notion of pseudo-slant submanifolds. The contact version of slant, semi slant and pseudo-slant submanifolds are studied in \cite{LOTTA}, \cite{CAR1} and \cite{6}, respectively. As a generalization of all these class of submanifolds, Ronsse \cite{RONSSE} introduced the notion of skew CR-submanifolds of Kaehler manifolds.\\
\indent The notion of warped product was introduced by Bishop and O'Neill in \cite{BISHOP} to construct the examples of manifolds with negative curvature. The study of warped product submanifolds was initiated by Chen (\cite{CHENCR1}, \cite{CHENCR2}). Then several authors studied warped product submanifolds. For detailed study of warped product submanifolds, we may refer to (\cite{CHENBOOK}, \cite{HAN}-\cite{HUOM}, \cite{UD1}). In this connection it may be mentioned that warped product submanifolds of Kenmotsu manifold are studied in (\cite{ATCE1}-\cite{OTHMAN}, \cite{KHAN}-\cite{KHANS1}, \cite{UDD2}, \cite{UDD1}-\cite{UOO}).\\
\indent Warped product skew CR-submanifolds of Kaehler manifold was studied by Sahin \cite{SAHAIN} and in  \cite{HAIDER} Haider et al. studied this class of submanifolds in cosympletic ambient. Recently Naghi et al. \cite{NAGHI} studied warped product skew CR-submanifolds of the form $M_1\times_fM_\bot$ of order $1$ of a Kenmotsu manifold $\bar{M}$ such that $M_1=M_T\times M_\theta$, where $M_T$, $M_\bot$ and $M_\theta$ are invariant, anti-invariant and proper slant submanifolds of $\bar{M}$. In this paper we have concentrated on another class of warped product skew CR-submanifolds of Kenmotsu manifolds of the form $M_2\times_f M_T$, where $M_2=M_\perp\times M_\theta$. The present paper is organized as follows: in section 2, some preliminaries are given, section 3 is dedicated to the study of skew CR-submanifold of Kenmotsu manifold, in section 4, we provide an example of warped product skew CR-submanifolds of the form $M_2\times_f M_T$ and some basic results of such type of submanifolds are obtained, a characterization of skew CR-warped product of the form $M_2\times_fM_T$ is obtained in section 5. In section 6, we have established two inequalities on a warped product skew CR-submanifold $M=M_2\times_f M_T$ of a Kenmotsu manifold $\bar{M}$.
\section{Preliminaries}
In \cite{TANNO} Tanno classified connected almost contact metric manifolds whose automorphism groups possess the maximum dimension.
For such a manifold, the sectional curvature of plane sections containing $\xi$ is a constant, say $c$. He proved that they could
be divided into three classes:
(i) homogeneous normal contact Riemannian manifolds with $c > 0$,
(ii) global Riemannian products of a line or a circle with a K\"{a}hler manifold of constant holomorphic sectional curvature
          if $c=0$ and
(iii) a warped product space $\mathbb{R} \times _f \mathbb{C}^n$ if $c< 0$.\\
 Kenmotsu \cite{KEN} characterized the differential geometric properties of the manifolds of class (iii) which are nowadays called Kenmotsu
manifolds and later studied by several authors (\cite{HUI1}-\cite{HUI3}) etc.\\
An odd dimensional smooth manifold $\bar{M}^{2m+1}$ is said to be an almost contact metric manifold \cite{BLAIR} if it admits a $(1,1)$
tensor field $\phi$, a vector field $\xi$, an $1$-form $\eta$ and a Riemannian metric $g$ which satisfy
\begin{equation}\label{2.1}
  \phi \xi=0,\ \ \ \eta(\phi X)=0, \ \ \ \phi^2 X=-X+\eta(X)\xi,
\end{equation}
\begin{equation}\label{2.2}
  g(\phi X,Y)=-g(X,\phi Y), \ \ \ \eta(X)=g(X,\xi), \ \ \ \eta(\xi)=1,
\end{equation}
\begin{equation}\label{2.3}
  g(\phi X,\phi Y)=g(X,Y)-\eta(X)\eta(Y)
\end{equation}
for all vector fields $X,Y$ on $\bar{M}$.\\
%%%%%%%%%%%%%%%%%%%%%%%%%%%%%%%%%
\indent An almost contact metric manifold $\bar{M}^{2m+1}(\phi, \xi, \eta, g)$ is said to be Kenmotsu manifold if the following conditions hold \cite{KEN}:
\begin{equation}\label{2.4}
  \bar{\nabla}_X \xi=X-\eta(X)\xi,
\end{equation}
\begin{equation}\label{2.5}
 (\bar{\nabla}_X \phi)(Y)=g(\phi X ,Y)\xi-\eta(Y)\phi X,
\end{equation}
where $\bar{\nabla}$ denotes the Riemannian connection of $g$.\\
\indent Let $M$ be an $n$-dimensional submanifold of a Kenmotsu manifold $\bar{M}$. Throughout the paper we assume that
the submanifold $M$ of $\bar{M}$ is tangent to the structure vector field $\xi$. Let $\nabla$ and $\nabla ^\bot$ be the induced connections on the tangent bundle $TM$ and the normal bundle $T^\bot M$ of $M$ respectively.
Then the Gauss and Weingarten formulae are given by
\begin{equation}\label{2.6}
  \bar{\nabla}_X Y=\nabla_X Y+h(X,Y)
\end{equation}
and
\begin{equation}\label{2.7}
  \bar{\nabla}_XN=-A_N X+\nabla _X^ {\bot}N
\end{equation}
for all $X,Y\in \Gamma(TM)$ and $N\in \Gamma(T^\bot M)$, where $h$ and $A_N$ are second fundamental form and the shape operator
(corresponding to the normal vector field $N$) respectively for the immersion of $M$ into $\bar{M}$ and they are related by $g(h(X,Y),N)=g(A_N X,Y)$
for any $X,Y\in \Gamma(TM)$ and $N\in \Gamma(T^\bot M)$, where g is the Riemannian metric on $\bar{M}$ as well as on $M$.

The mean curvature $H$ of $M$ is given by $H=\frac{1}{n}\text{trace}\ h$. A submanifold $M$ of a Kenmotsu manifold $\bar{M}$ is said to be totally umbilical if $h(X,Y)=g(X,Y)H$ for any $X,Y\in \Gamma(TM)$. If $h(X,Y)=0$ for all $X,Y\in \Gamma(TM)$, then $M$ is totally geodesic and if $H=0$ then $M$ is minimal in $\bar{M}$.\\
\indent Let $\{e_1,\cdots,e_n\}$ be an orthonormal basis of the tangent bundle $TM$ and $\{e_{n+1},\cdots,e_{2m+1}\}$ be that of the normal bundle
 $T^\bot M$. Set
 \begin{equation}\label{2.8}
 h_{ij}^r=g(h(e_i,e_j),e_r)\ \text{and} \ \ \|h\|^2=g(h(e_i,e_j),h(e_i,e_j)),
 \end{equation}
 for $i,j\in \{1,\cdots, n\}$ and $r\in\{n+1,\cdots,2m+1\}$. For a differentiable function $f$ on $M$, the gradient $\boldsymbol{\nabla}f$ is defined by
 \begin{equation}\label{2.9}
 g(\boldsymbol{\nabla}f,X)=Xf
 \end{equation}
 for any $X\in\Gamma(TM)$. As a consequence, we get
 \begin{equation}\label{2.10}
 \|\boldsymbol{\nabla} f\|^2=\sum_{i=1}^{n}(e_i(f))^2.
 \end{equation}
\indent For any $X\in \Gamma(TM)$ and $N\in \Gamma(T^\bot M)$, we can write
\begin{eqnarray}
% \nonumber % Remove numbering (before each equation)
\label{2.11}
\text{(a)} \  \phi X = PX+QX,\ \  \text{(b)}\  \phi N= bN+cN
\end{eqnarray}
where $P X,\ bN$ are the tangential components and $QX,\ cN$ are the normal components.\\
%%%%%%%%%%%%%%%%%%%%%%%%%%%%%%%%%%%%%%
\indent A submanifold $M$ of an almost contact metric manifold $\bar{M}$ is said to be invariant if $\phi(T_pM)\subseteq T_pM$  and anti-invariant if $\phi (T_pM)\subseteq T^\bot_pM$ for every $p\in M$.\\
\indent A submanifold $M$ of an almost contact metric manifold $\bar{M}$ is said to be slant if for each non-zero vector $X\in T_pM$, the angle $\theta$ between $\phi X$
and $T_pM$ is a constant, i.e. it does not depend on the choice of $p\in M$. Invariant and anti-invariant submanifolds are particular cases of slant submanifolds with slant angles $\theta=0$ and $\frac{\pi}{2}$ respectively.
\begin{theorem}\cite{CAR}
Let $M$ be a submanifold of an almost contact metric manifold $\bar{M}$ such that $\xi\in \Gamma(TM)$. Then, $M$ is slant if and
 only if there exists a constant $\lambda\in[0,1]$ such that
 \begin{equation}\label{2.12}
 P^2=\lambda(-I+\eta\otimes\xi),
 \end{equation}
 furthermore if $\theta$ is slant angle then $\lambda=\cos^2\theta$.
\end{theorem}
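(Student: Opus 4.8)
The plan is to reduce the statement to an identity of self-adjoint operators, obtained by first expressing the pointwise slant angle through the tangential operator $P$ defined in \eqref{2.11}(a). The first step I would carry out is to record that $P$ is skew-symmetric: for $X,Y\in\Gamma(TM)$, decomposing $\phi X$ and using \eqref{2.2} gives
\begin{equation*}
g(PX,Y)=g(\phi X,Y)=-g(X,\phi Y)=-g(X,PY),
\end{equation*}
since the normal component $QY$ contributes nothing when paired with the tangential vector $X$. Consequently $P^{2}$ is self-adjoint, because $g(P^{2}X,Y)=-g(PX,PY)=g(X,P^{2}Y)$, and in particular $\|PX\|^{2}=-g(P^{2}X,X)$.

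Next I would express the slant angle. For a nonzero $X\in T_pM$ not proportional to $\xi$, the decomposition $\phi X=PX+QX$ is orthogonal, so $PX$ is the orthogonal projection of $\phi X$ onto $T_pM$ and the angle $\theta(X)$ between $\phi X$ and $T_pM$ satisfies $\cos\theta(X)=\|PX\|/\|\phi X\|$. Using \eqref{2.3} I have $\|\phi X\|^{2}=\|X\|^{2}-\eta(X)^{2}$, and noting that $g\big((-I+\eta\otimes\xi)X,X\big)=-\|X\|^{2}+\eta(X)^{2}$, the previous step yields the pointwise formula
\begin{equation*}
\cos^{2}\theta(X)=\frac{-g(P^{2}X,X)}{\|X\|^{2}-\eta(X)^{2}}=\frac{g(P^{2}X,X)}{g\big((-I+\eta\otimes\xi)X,X\big)} .
\end{equation*}

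For the forward implication, assuming $M$ is slant with constant angle $\theta$, this formula gives $g(P^{2}X,X)=\cos^{2}\theta\,g\big((-I+\eta\otimes\xi)X,X\big)$ for every $X$ not proportional to $\xi$, and the degenerate direction is handled by the direct check that $P\xi=0$ and $(-I+\eta\otimes\xi)\xi=0$, so the identity persists there as well. Thus the operator $S:=P^{2}-\cos^{2}\theta\,(-I+\eta\otimes\xi)$ satisfies $g(SX,X)=0$ for all $X\in\Gamma(TM)$. Since $P^{2}$ is self-adjoint and $\eta\otimes\xi$ is self-adjoint (because $g\big((\eta\otimes\xi)X,Y\big)=\eta(X)\eta(Y)$ is symmetric), $S$ is self-adjoint, and a polarization argument forces $S=0$; setting $\lambda=\cos^{2}\theta\in[0,1]$ gives \eqref{2.12}. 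For the converse, if $P^{2}=\lambda(-I+\eta\otimes\xi)$ with $\lambda$ constant, the pointwise formula shows $\cos^{2}\theta(X)=\lambda$ independently of $X$ and of $p$, so $M$ is slant with $\lambda=\cos^{2}\theta\in[0,1]$.

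The main obstacle is precisely the passage from the scalar identity $g(SX,X)\equiv0$ to the operator identity $S=0$: the pointwise angle formula is only valid for $X$ not proportional to $\xi$, so I must separately verify agreement on the $\xi$-direction and then invoke self-adjointness of $S$ together with polarization; without the skew-symmetry of $P$ (hence symmetry of $P^{2}$) this last step would fail.
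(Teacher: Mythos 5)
Your proof is correct: the skew-symmetry of $P$ (hence self-adjointness of $P^{2}$), the angle formula $\cos\theta(X)=\|PX\|/\|\phi X\|$ with $\|\phi X\|^{2}=\|X\|^{2}-\eta(X)^{2}$, the separate check along the degenerate direction $\xi$, and the polarization argument for the self-adjoint operator $S$ together establish both implications and the identification $\lambda=\cos^{2}\theta$. Note that the paper states this theorem without proof, citing \cite{CAR}; your argument is essentially the standard proof from that source (the contact-metric adaptation of Chen's polarization argument), so it matches the intended approach.
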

If $M$ is a slant submanifold of an almost contact metric manifold $\bar{M}$, the following relation holds \cite{UAS}:
\begin{equation}\label{2.15}
bQX=\sin^2\theta\{-X+\eta(X)\xi\}, \ \ \ cQX=-QPX.
\end{equation}
\begin{definition}\cite{BISHOP} Let $(N_1,g_1)$ and $(N_2,g_2)$ be two Riemannian manifolds with Riemannian metric $g_1$
and $g_2$ respectively and $f$ be a positive definite smooth function on $N_1$. The warped product
of $N_1$ and $N_2$ is the Riemannian manifold $N_1\times_{f}N_2 = (N_1\times N_2,g)$, where
%%%%%%%%%%%%%%%%%%%%
\begin{equation}
\label{2.13}
g=g_1+f^2g_2.
\end{equation}
%%%%%%%%%%%%%%%%%%%
\end{definition}
%%%%%%%%%%%%%%%%%%%%
\noindent A warped product manifold $N_1\times_{f}N_2$ is said to be trivial if the warping function $f$ is constant. For a warped product manifold $M=N_1\times_{f}N_2$, we have \cite{BISHOP}
%%%%%%%%%%%%%%%%
\begin{eqnarray}\label{2.14}
\nabla_UX = \nabla_XU = (X\ln f) U
\end{eqnarray}
%%%%%%%%%%%%%%%%%%
for any $X$, $Y\in\Gamma(TN_1)$ and $U\in\Gamma(TN_2)$.

We now recall the following:
\begin{theorem}\emph{(Hiepko's Theorem, see \cite{HIPKO})}.
Let $\mathcal{D}_1$ and $\mathcal{D}_2$ be two orthogonal distribution on a Riemannian manifold $M$. Suppose that $\mathcal{D}_1$ and $\mathcal{D}_2$ both are involutive such that $\mathcal{D}_1$ is a totally geodesic foliation and $\mathcal{D}_2$ is a spherical foliation. Then $M$ is locally isometric to a non-trivial warped product $M_2\times_fM_2$, where $M_2$ and $M_2$ are integral manifolds of $\mathcal{D}_1$ and $\mathcal{D}_2$, respectively.
\end{theorem}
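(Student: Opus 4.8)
The plan is to realize the warped product structure explicitly through adapted local coordinates and to read off the three hypotheses --- involutivity, total geodesy, and sphericity --- as conditions on the components of the metric. Since $\mathcal{D}_1$ and $\mathcal{D}_2$ are both involutive and $TM=\mathcal{D}_1\oplus\mathcal{D}_2$ orthogonally, Frobenius' theorem furnishes about each point a chart $(x^1,\dots,x^p,y^1,\dots,y^q)$ in which $\mathcal{D}_1=\mathrm{span}\{\partial_{x^i}\}$ and $\mathcal{D}_2=\mathrm{span}\{\partial_{y^a}\}$. Orthogonality of the two distributions forces the metric to be block diagonal,
$$g=\sum_{i,j}g_{ij}(x,y)\,dx^i\,dx^j+\sum_{a,b}g_{ab}(x,y)\,dy^a\,dy^b,$$
with no mixed terms. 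The goal is to show that after these normalisations the first block depends only on $x$, the second block factors as $f(x)^2$ times a $y$-dependent metric, and hence $g=g_1+f^2g_2$ is a warped product in the sense of the Definition.

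First I would exploit that $\mathcal{D}_1$ is a totally geodesic foliation. Writing the second fundamental form of a leaf $\{y=\mathrm{const}\}$ via the Koszul formula and using that coordinate fields commute, one finds $g\bigl(h_1(\partial_{x^i},\partial_{x^j}),\partial_{y^a}\bigr)=-\tfrac12\,\partial_{y^a}g_{ij}$. Hence total geodesy of $\mathcal{D}_1$ is equivalent to $\partial_{y^a}g_{ij}=0$, i.e. the base block $g_{ij}=g_{ij}(x)$ is independent of the fibre coordinates; this gives the candidate base metric $g_1$ on the integral manifold of $\mathcal{D}_1$. Next I would use that $\mathcal{D}_2$ is totally umbilical. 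The same computation for the leaves $\{x=\mathrm{const}\}$ yields $g\bigl(h_2(\partial_{y^a},\partial_{y^b}),\partial_{x^i}\bigr)=-\tfrac12\,\partial_{x^i}g_{ab}$, so the umbilicity relation $h_2(V,W)=g(V,W)H$ becomes $\partial_{x^i}\ln g_{ab}$ being independent of the indices $a,b$. Integrating, the fibre block splits as $g_{ab}(x,y)=f(x,y)^2\,\hat g_{ab}(y)$ for a positive function $f$ and a $y$-only metric $\hat g_{ab}$; at this stage $M$ is a twisted product and the mean curvature vector of the fibres is $H=-\,\mathrm{grad}_{\mathcal{D}_1}(\ln f)$.

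The main obstacle is the final step: to upgrade this twisted product to a genuine warped product I must show $f=f(x)$, and this is exactly where the \emph{spherical} hypothesis enters. Imposing $\nabla^{\perp}H=0$ along the fibres and expanding $g(\nabla^{\perp}_{\partial_{y^a}}H,\partial_{x^i})$ with the Gauss and Weingarten formulae reduces, after cancellation, to $\partial_{y^a}\partial_{x^i}\ln f=0$; together with the freedom to absorb an additive $y$-function into $\hat g_{ab}$, this lets me take $\partial_{y^a}f=0$, so that $f$ descends to a function on the integral manifold $M_1$ of $\mathcal{D}_1$. With $g_{ij}=g_{ij}(x)$, $f=f(x)$ and $g_{ab}=f^2\hat g_{ab}(y)$ the metric reads $g=g_1+f^2g_2$, where $g_2=\hat g_{ab}(y)\,dy^a\,dy^b$ is the induced metric on the integral manifold $M_2$ of $\mathcal{D}_2$; by the Definition this is precisely the local warped product $M_1\times_fM_2$. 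Finally, non-triviality follows because a spherical foliation that is not totally geodesic has nonvanishing mean curvature $H\neq0$, forcing $\mathrm{grad}(\ln f)\neq0$ and hence $f$ non-constant. I expect the delicate point throughout to be the bookkeeping in the spherical step --- separating which part of $\nabla H$ is tangential and which is normal to the fibres --- since that is precisely what distinguishes a warped product from a merely twisted one.
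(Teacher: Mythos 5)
There is nothing in the paper to compare your argument against: the statement is Hiepko's criterion, quoted (as Theorem 2.2) from the cited reference \cite{HIPKO} without proof, and used later only as a tool in the converse half of Theorem 5.1. Judged on its own terms, your coordinate proof is essentially correct and is the standard local argument. The three reductions are each right: a simultaneous bi-foliated chart $(x,y)$ exists (though this deserves a word more than a bare appeal to Frobenius --- apply Frobenius to each distribution separately and observe that the pair of local leaf-space projections has kernel $\mathcal{D}_1\cap\mathcal{D}_2=\{0\}$, hence is a local diffeomorphism); orthogonality block-diagonalizes $g$; the Koszul formula gives $g\bigl(h_1(\partial_{x^i},\partial_{x^j}),\partial_{y^a}\bigr)=-\tfrac12\partial_{y^a}g_{ij}$, so total geodesy of $\mathcal{D}_1$ is exactly $g_{ij}=g_{ij}(x)$; umbilicity of $\mathcal{D}_2$ is exactly $\partial_{x^i}g_{ab}=\lambda_i\,g_{ab}$ with $\lambda_i$ independent of $a,b$ (this is the clean way to say what you phrase as component-wise ``$\partial_{x^i}\ln g_{ab}$''), which integrates to the twisted form $g_{ab}=f(x,y)^2\hat g_{ab}(y)$; and, because $g_{ij}$ is now $y$-independent, $\nabla_{\partial_{y^a}}\partial_{x^i}$ has no $\mathcal{D}_1$-component, so $\nabla^{\perp}_{\partial_{y^a}}H=-g^{ij}(\partial_{y^a}\partial_{x^j}\ln f)\,\partial_{x^i}$, and parallelism of $H$ forces $\ln f=F(x)+G(y)$, after which $e^{2G}$ is absorbed into $\hat g_{ab}$. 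That yields precisely the local isometry with $M_1\times_f M_2$ asserted in the statement (whose wording ``$M_2\times_fM_2$'' is a typo you have silently and correctly repaired).

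One caveat, which you partially flag yourself: the word ``non-trivial'' in the paper's formulation is not deducible from the stated hypotheses. A totally geodesic foliation is spherical with $H=0$, in which case your $f$ comes out constant and the product is a direct product. Your closing sentence correctly identifies that non-triviality needs the extra hypothesis that $\mathcal{D}_2$ is spherical but not totally geodesic ($H\neq 0$); this is a defect of the paper's loose statement, not of your proof, and it is harmless for the paper's application, where the relevant mean curvature is $-\boldsymbol{\nabla}\mu$ and triviality of the warping is allowed or excluded separately.
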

\section{Skew $CR$-submanifolds of Kenmotsu manifolds}
Let $M$ be a submanifold of a Kenmotsu manifold $\bar{M}$. First from \cite{RONSSE}, we recall the definition of skew $CR$-submanifolds. Throughout the paper we consider the structure vector field $\xi$ is tangent to the submanifold otherwise the submanifold is $C$-totally real \cite{HASE}.\\
For any $X$ and $Y$ in $T_pM$, we have $g(PX,Y)=-g(X,PY)$. Hence it follows that $P^2$ is symmetric operator on the tangent space $TM$, for all $p\in M$. Therefore the eigen values are real and it is diagonalizable. Moreover its eigen values are bounded by $-1$ and $0$. For each $p\in M$, we may set
\begin{equation*}
\mathcal{D}^{\lambda}_{p}=ker\{P^2+\lambda^2(p)I\}_p,
\end{equation*}
where $I$ is the identity transformation and $\lambda(p)\in [0,1]$ such that $\-\lambda^2(p)$ is an eigen value of $P^2_p$. We note that $\mathcal{D}^1_{p}=ker Q$ and $\mathcal{D}^0_{p}=ker P$. $\mathcal{D}^1_{p}$ is the maximal $\phi$-invariant subspace of $T_pM$ and $\mathcal{D}^0_p$ is the maximal $\phi$-anti-invariant subspace of $T_pM$. From now on, we denote the distributions $\mathcal{D}^1$ and $\mathcal{D}^0$ by $\mathcal{D}^T\oplus <\xi>$ and $\mathcal{D}^\bot$, respectively. Since $P^2_p$ is symmetric and diagonalizable, if $-\lambda^2_1(p), \cdots, -\lambda^2_k(p)$ are the eigenvalues of $P^2$ at $p\in M$, then $T_pM$ can be decomposed as direct sum of mutually orthogonal eigen spaces, i.e.
\begin{equation*}
T_pM=\mathcal{D}^{\lambda_1}_p\oplus\mathcal{D}^{\lambda_2}_p \cdots \oplus\mathcal{D}^{\lambda_k}_p.
\end{equation*}
Each $\mathcal{D}^{\lambda_i}_p$, $1\leq i\leq k$ defined on $M$ with values in $(0,1)$ such that\\
(i) Each $-\lambda^2_i(p)$, $1\leq i\leq k$ is a distinct eigen value  of $P^2$ with
\begin{equation*}
T_pM=\mathcal{D}^T_p\oplus\mathcal{D}^\bot_p\oplus\mathcal{D}^{\lambda_1}_p\oplus\mathcal{D}^{\lambda_2}_p \cdots\oplus\mathcal{D}^{\lambda_k}_p\oplus <\xi>_p
\end{equation*}
for any $p\in M$.\\
(ii) The dimensions of  $\mathcal{D}^T_p,$ $\mathcal{D}_p^\bot$ and $\mathcal{D}^{\lambda_i}, 1\leq i\leq k$ are independent on $p\in M$. \\
Moreover, if each $\lambda_i$ is constant on $M$, then $M$ is called a skew CR-submanifold. Thus, we observe that  CR-submanifolds are a particular class of skew CR-submanifolds with $k=0$, $\mathcal{D}^T\neq \{0\}$ and $\mathcal{D}^\bot\neq \{0\}$. And slant submanifolds are also a particular class of skew CR-submanifold  with $k=1$, $\mathcal{D}^T=\{0\}$,  $\mathcal{D}^\bot= \{0\}$ and $\lambda_1$ is constant. Moreover, if $\mathcal{D}^\bot=\{0\}$, $\mathcal{D}^T\neq 0$ and $k=1$, then $M$ is semi-slant submanifold. Furthermore, if $\mathcal{D}^T=\{0\}$, $\mathcal{D}^\bot\neq \{0\}$ and $k=1$, then $M$ is a pseudo-slant (or hemi-slant) submanifold.\\
\indent A submanifold $M$ of $\bar{M}$ is said to be proper skew CR-submanifold of order 1 if $M$ is a skew CR-submanifold with $k=1$ and $\lambda_1$ is constant. In that case, the tangent bundle of $M$ is decomposed as
\begin{equation*}
TM=\mathcal{D}^T\oplus\mathcal{D}^\bot\oplus\mathcal{D}^\theta\oplus<\xi>.
\end{equation*}
The normal bundle $T^\bot M$ of a skew CR-submanifold $M$ is decomposed as
\begin{equation*}
T^\bot M=\phi\mathcal{D}^\bot\oplus Q\mathcal{D}^\theta\oplus \nu,
\end{equation*}
where $\nu$ is a $\phi$-invariant normal subbundle of $T^\bot M$.\\
\indent Now for the sake of further study we give the following useful results.
\begin{lemma}
Let $M$ be a proper skew $CR$-submanifold of order $1$ of a Kenmotsu manifold $\bar{M}$ such that $\xi \in\Gamma (\mathcal{D}^\bot\oplus \mathcal{D}^\theta)$, then we have
\begin{equation}\label{3.1}
 g(\nabla_XY,Z)=g(A_{\phi Z}X,\phi Y)-\eta(Z)g(X,Y),
\end{equation}
\begin{equation}\label{3.2}
 g(\nabla_XY,U)=\csc^2\theta [g(A_{QU}X,\phi Y)-g(A_{QPU}X,Y)]-\eta(U)g(X,Y)
\end{equation}
for every $X,Y \in \Gamma(\mathcal{D}^T)$, $Z\in \Gamma(\mathcal{D}^\bot)$ and $U\in \Gamma(\mathcal{D}^\theta)$.
\end{lemma}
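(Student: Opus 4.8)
The plan is to reduce both identities to statements about the shape operator by repeatedly moving $\phi$ across the metric and invoking the Gauss--Weingarten machinery together with the Kenmotsu identities \eqref{2.4} and \eqref{2.5}. Two preliminary observations organize the computation. First, since $\xi\in\Gamma(\mathcal{D}^\bot\oplus\mathcal{D}^\theta)$ is orthogonal to $\mathcal{D}^T$, we have $\eta(X)=\eta(Y)=0$ for $X,Y\in\Gamma(\mathcal{D}^T)$, and because $\mathcal{D}^T$ is $\phi$-invariant we may freely replace $\phi Y$ by its tangential part $PY\in\Gamma(\mathcal{D}^T)$. Second, a direct computation from \eqref{2.4} gives the auxiliary identity $g(\bar\nabla_XY,\xi)=-g(X,Y)$: differentiating $g(Y,\xi)=\eta(Y)=0$ and substituting $\bar\nabla_X\xi=X-\eta(X)\xi$ yields the stated value. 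This identity is exactly what generates the $-\eta(\cdot)\,g(X,Y)$ terms on the right-hand sides.

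For \eqref{3.1} I would start from $g(\nabla_XY,Z)=g(\bar\nabla_XY,Z)$ and use that, for $Z\in\Gamma(\mathcal{D}^\bot)$, one has $PZ=0$ so that $\phi Z=QZ$ is normal; hence \eqref{2.15} with slant angle $\pi/2$ gives $bQZ=-Z+\eta(Z)\xi$, i.e.\ $Z=-\,bQZ+\eta(Z)\xi$. Substituting this for $Z$ splits the inner product into a term along $\xi$, handled by the auxiliary identity, and the term $-g(\bar\nabla_XY,bQZ)$. I then rewrite $bQZ=\phi(QZ)$ (as $cQZ=0$), move $\phi$ across the metric via \eqref{2.2} to get $-g(\bar\nabla_XY,bQZ)=g(\phi\bar\nabla_XY,QZ)$, and compute $\phi\bar\nabla_XY=\bar\nabla_X(\phi Y)-g(\phi X,Y)\xi$ from \eqref{2.5} (the $\eta(Y)\phi X$ term drops since $\eta(Y)=0$). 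Pairing against the normal field $QZ$ kills the $\xi$-term and reduces $g(\bar\nabla_X(\phi Y),QZ)$ to $g(h(X,\phi Y),QZ)=g(A_{\phi Z}X,\phi Y)$ through the defining relation $g(h(\cdot,\cdot),N)=g(A_N\cdot,\cdot)$. Collecting terms yields \eqref{3.1}.

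For \eqref{3.2} the same scheme applies with the genuine slant relation \eqref{2.15} in place of the anti-invariant one. Here the key algebraic input is $bQU=\sin^2\theta\,(-U+\eta(U)\xi)$, which I rearrange as $\sin^2\theta\,U=-\,bQU+\sin^2\theta\,\eta(U)\xi$; taking the inner product of $\bar\nabla_XY$ against this and using the auxiliary identity isolates $\sin^2\theta\,g(\nabla_XY,U)$ as $-g(\bar\nabla_XY,bQU)-\sin^2\theta\,\eta(U)\,g(X,Y)$. Now $bQU=\phi(QU)-cQU$ with $cQU=-QPU$ by \eqref{2.15}, so the term $-g(\bar\nabla_XY,bQU)$ breaks into two pieces: the piece $-g(\bar\nabla_XY,\phi(QU))=g(\phi\bar\nabla_XY,QU)$ gives $g(A_{QU}X,\phi Y)$ after inserting $\phi\bar\nabla_XY=\bar\nabla_X(\phi Y)-g(\phi X,Y)\xi$ exactly as above, while the piece $g(\bar\nabla_XY,cQU)=-g(\bar\nabla_XY,QPU)=-g(h(X,Y),QPU)=-g(A_{QPU}X,Y)$ comes straight from pairing against the normal field $QPU$. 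Multiplying through by $\csc^2\theta$ gives \eqref{3.2}.

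The computations are routine once the substitutions are chosen; the only real subtlety --- and the step I would be most careful about --- is the correct bookkeeping of tangential versus normal components when $\phi$ is applied. Specifically, one must consistently use that $\phi Y$ is tangential for $Y\in\mathcal{D}^T$ while $QU$, $QPU$, $\phi Z$ are normal, and that $g(\bar\nabla_XY,N)=g(h(X,Y),N)$ only for normal $N$; misplacing the $b$/$c$ (tangential/normal) component of $\phi(QU)$ is the easiest way to corrupt the $\csc^2\theta$ factor. Verifying that \eqref{3.1} is the $\theta=\pi/2$ specialization of \eqref{3.2} (where $QPZ=0$ and $QZ=\phi Z$) provides a useful consistency check on the bookkeeping.
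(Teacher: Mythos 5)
Your proof is correct, and while your treatment of (3.1) is essentially the paper's computation, your treatment of (3.2) takes a genuinely different route. For (3.1), writing $Z=-bQZ+\eta(Z)\xi$ and moving one $\phi$ across the metric is exactly the paper's opening step $g(\nabla_XY,Z)=g(\phi\bar{\nabla}_XY,\phi Z)+\eta(Z)g(\bar{\nabla}_XY,\xi)$, i.e.\ relation (2.3) read in the other direction. For (3.2), the paper splits $\phi U=PU+QU$, integrates by parts to throw the derivative onto $PU$ and $QU$, applies (2.5) a second time, differentiates $P^2U$ via (2.12) (using constancy of $\theta$), and only at the very end produces the $\sin^2\theta$ factor by absorbing the resulting $\cos^2\theta\, g(\bar{\nabla}_XU,Y)$ term back into the left-hand side. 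You instead invoke the pointwise identity (2.15), substituting $\sin^2\theta\,U=-bQU+\sin^2\theta\,\eta(U)\xi$ and $bQU=\phi(QU)+QPU$, so the trigonometric factor is present from the outset, no covariant derivative ever falls on $P$ or $Q$, and a single application of (2.5) together with the Gauss--Weingarten relations finishes the job. Your version is shorter and avoids the integration-by-parts bookkeeping; its only cost is that it leans on (2.15), which the paper states for slant \emph{submanifolds} rather than for the slant distribution $\mathcal{D}^\theta$ of a skew CR-submanifold. This is harmless rather than a gap --- (2.15) follows pointwise from (2.12) by comparing tangential and normal parts of $\phi^2U=\phi PU+\phi QU$, and the paper itself applies (2.15) to vectors of $\mathcal{D}^\theta$ in deriving (3.9) --- but one sentence making that remark would render your proof fully self-contained. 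Your closing consistency check, that (3.1) is the $\theta=\pi/2$ specialization of (3.2), is a useful sanity test the paper does not include.
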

\begin{proof}
For any $X,Y \in \Gamma(\mathcal{D}^T)$, $Z\in \Gamma(\mathcal{D}^\bot)$, we have
\begin{eqnarray*}
  g(\nabla_XY,Z) &=& g(\phi \bar{\nabla}_XY,\phi Z)+\eta(Z)g(\bar{\nabla}_XY,\xi) \\
  \nonumber &=& g(\bar{\nabla}_X\phi Y,\phi Z)-g((\bar{\nabla}_X\phi)Y,\phi Z)-\eta(Z)g(Y,\bar{\nabla}_X\xi).
\end{eqnarray*}
Using (2.4), (2.5) and (2.6) in the above equation, we get (\ref{3.1}).
Also, for $X,Y \in \Gamma(\mathcal{D}^T)$, $U\in \Gamma(\mathcal{D}^\theta)$, we have
\begin{eqnarray*}
  g(\nabla_XY,U) &=& g(\phi \bar{\nabla}_XY,\phi U)+\eta(U)g(\bar{\nabla}_XY,\xi) \\
  \nonumber &=& g(\bar{\nabla}_X\phi Y,\phi U)-g((\bar{\nabla}_X\phi)Y,\phi U)-\eta(U)g(Y,\bar{\nabla}_X\xi)\\
  \nonumber &=& g(\bar{\nabla}_X\phi Y,PU)+g(\bar{\nabla}_X\phi Y,QU)-\eta(U)g(X,Y)\\
  \nonumber &=& -g(\phi Y,\bar{\nabla}_XPU)-g(\phi Y,\bar{\nabla}_XQU)-\eta(U)g(X,Y)\\
  \nonumber &=& g(\bar{\nabla}_X\phi PU,Y)-g((\bar{\nabla}_X\phi)PU,Y)-g(\bar{\nabla}_XQU,\phi Y)-\eta(U)g(X,Y)\\
  \nonumber &=& g(\bar{\nabla}_X P^2U,Y)+g(\bar{\nabla}_XQPU,Y)-g(\bar{\nabla}_XQU,\phi Y)-\eta(U)g(X,Y).
\end{eqnarray*}
By virtue of (\ref{2.4}), (\ref{2.7}) and (\ref{2.12}) the above equation yields
\begin{eqnarray*}
  g(\nabla_XY,U) &=& -\cos^2\theta g(\bar{\nabla}_XU,Y)+\cos^2\theta\eta(U)g(X,Y) \\
  \nonumber &-& g(A_{QPU}X,Y)+g(A_{QU}X,\phi Y)- \eta(U)g(X,Y),
\end{eqnarray*}
Thus we get
\begin{eqnarray*}
\sin^2\theta g(\nabla_XY,U)=g(A_{QU}X,\phi Y)-g(A_{QPU}X,Y)-\sin^2\theta\eta(U)g(X,Y).
\end{eqnarray*}
From which the relation (3.2) follows.
\end{proof}
\begin{corollary}
Let $M$ be a proper skew $CR$-submanifold of order $1$ of a Kenmotsu manifold $\bar{M}$ such that $\xi\in\Gamma(\mathcal{D}^\bot\oplus \mathcal{D}^\theta)$, then we have
\begin{equation}\label{3.3}
g([X,Y],Z)=g(A_{\phi Z}X,\phi Y)-g(A_{\phi Z}Y,\phi X)
\end{equation}
\begin{equation}\label{3.4}
g([X,Y],U)=\csc^2\theta \{g(A_{QU}X,\phi Y)-g(A_{QU}Y,\phi X)\}
\end{equation}
for every $X,Y \in \Gamma(\mathcal{D}^T)$, $Z\in \Gamma(\mathcal{D}^\bot)$ and $U\in\Gamma(\mathcal{D}^\theta)$.
\end{corollary}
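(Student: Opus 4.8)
The plan is to antisymmetrize the formulas of Lemma 3.1. Since the Levi-Civita connection $\bar{\nabla}$ (and hence the induced connection $\nabla$) is torsion-free, we have $[X,Y]=\nabla_XY-\nabla_YX$, so both $g([X,Y],Z)$ and $g([X,Y],U)$ can be computed by applying (\ref{3.1}) and (\ref{3.2}) once with the pair $(X,Y)$ and once with $(Y,X)$ and then subtracting. I expect most of the auxiliary terms to cancel, leaving exactly the stated expressions.

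First I would establish (\ref{3.3}). Interchanging $X$ and $Y$ in (\ref{3.1}) gives $g(\nabla_YX,Z)=g(A_{\phi Z}Y,\phi X)-\eta(Z)g(Y,X)$, and subtracting this from (\ref{3.1}) yields
\begin{equation*}
g([X,Y],Z)=g(A_{\phi Z}X,\phi Y)-g(A_{\phi Z}Y,\phi X)-\eta(Z)\bigl(g(X,Y)-g(Y,X)\bigr).
\end{equation*}
The final term vanishes since $g$ is symmetric, and this is precisely (\ref{3.3}).

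For (\ref{3.4}) I would repeat this with (\ref{3.2}) applied to $g(\nabla_XY,U)$ and to $g(\nabla_YX,U)$. The two $\eta(U)$ contributions again cancel by symmetry of $g$, and the subtraction produces
\begin{equation*}
g([X,Y],U)=\csc^2\theta\bigl[g(A_{QU}X,\phi Y)-g(A_{QU}Y,\phi X)-g(A_{QPU}X,Y)+g(A_{QPU}Y,X)\bigr].
\end{equation*}
The crux — and really the only step that is not pure bookkeeping — is that the pair of $A_{QPU}$ terms must cancel. This follows from the self-adjointness of the shape operator: using $g(A_NV,W)=g(h(V,W),N)=g(h(W,V),N)=g(A_NW,V)$ together with the symmetry of the second fundamental form $h$, we get $g(A_{QPU}X,Y)=g(A_{QPU}Y,X)$, so these two terms annihilate each other and (\ref{3.4}) follows.

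I anticipate no serious difficulty: the corollary is a formal consequence of Lemma 3.1 combined with two basic symmetries — that of the metric $g$ and that of the shape operator. The only point to watch is symmetrizing the correct index pair, namely recognizing that it is the $A_{QPU}$ terms (whose arguments are the plain $X,Y$) that collapse, whereas the $A_{QU}$ terms carry the extra factor $\phi$ on one argument and hence survive as the genuine content of the identity.
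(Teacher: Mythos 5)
Your proof is correct and is exactly the derivation the paper intends: the corollary is stated immediately after Lemma 3.1 with no written proof, precisely because it follows by antisymmetrizing (\ref{3.1}) and (\ref{3.2}) via $[X,Y]=\nabla_XY-\nabla_YX$, with the $\eta$-terms cancelling by symmetry of $g$ and the $A_{QPU}$-terms cancelling by self-adjointness of the shape operator, as you observe.
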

\begin{lemma}
Let $M$ be a proper skew $CR$-submanifold of order $1$ of a Kenmotsu manifold $\bar{M}$ such that $\xi\in\Gamma(\mathcal{D}^\bot\oplus \mathcal{D}^\theta)$, then we have
\begin{equation}\label{3.5}
 g(\nabla_ZW,X)=-g(A_{\phi W}\phi X,Z),
\end{equation}
\begin{equation}\label{3.6}
 g(\nabla_ZU,X)=\csc^2\theta\{g(A_{QPU}X,Z)-g(A_{QU}\phi X,Z)\},
\end{equation}
\begin{equation}\label{3.7}
g(\nabla_UZ,X)=-g(A_{\phi Z}X,U),
\end{equation}
\begin{equation}\label{3.8}
g(\nabla_UV,X)=\csc^2\theta\{g(A_{QPV}X,U)-g(A_{QV}\phi X,U)\},
\end{equation}
\begin{equation}\label{3.9}
g(\nabla_XZ,U)=\sec^2\theta\{g(A_{QPU}Z,X)-g(A_{\phi Z}PU,X)
\end{equation}
 for $X\in \Gamma(\mathcal{D}^T)$, $Z,W\in \Gamma(\mathcal{D}^\bot)$ and $U,V\in\Gamma(\mathcal{D}^\theta)$.
\end{lemma}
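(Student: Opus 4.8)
The plan is to establish all five identities \eqref{3.5}--\eqref{3.9} by the same mechanism already used in the preceding lemma, namely by converting each Levi-Civita coefficient into the shape operator through the Gauss and Weingarten formulae. The structural facts I will lean on are that for $X\in\Gamma(\mathcal{D}^T)$ one has $\phi X=PX\in\Gamma(\mathcal{D}^T)$, $QX=0$, $\eta(X)=0$; that for $Z\in\Gamma(\mathcal{D}^\bot)$ one has $\phi Z=QZ\in\Gamma(T^\bot M)$ and $PZ=0$; and that for $U\in\Gamma(\mathcal{D}^\theta)$ one has $\phi U=PU+QU$ with $PU\in\Gamma(\mathcal{D}^\theta)$ tangential and $QU\in\Gamma(T^\bot M)$ normal. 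Since $\phi\xi=0$ forces $P\xi=0$, i.e. $\xi\in\Gamma(\mathcal{D}^\bot)$, we also have $\eta(U)=0$ for $U\in\Gamma(\mathcal{D}^\theta)$. Each identity begins from $g(\nabla_AB,X)=g(\bar\nabla_AB,X)$ and, since the target vector is invariant (so $\eta(X)=0$) and any residual $\eta(\bar\nabla_{(\cdot)}(\cdot))$ vanishes by \eqref{2.4}, I rewrite it via \eqref{2.3} as $g(\phi\bar\nabla_AB,\phi X)$ and then push $\phi$ inside through the Kenmotsu identity \eqref{2.5}, writing $\phi\bar\nabla_AB=\bar\nabla_A\phi B-(\bar\nabla_A\phi)B$.

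For the two identities \eqref{3.5} and \eqref{3.7}, in which the differentiated direction lies in the anti-invariant distribution, this single step already produces a normal argument $\phi W=QW$ (respectively $\phi Z=QZ$), so the Weingarten formula \eqref{2.7} converts $\bar\nabla_{(\cdot)}(QW)$ directly into a term $-A_{\phi W}(\cdot)$ (respectively $-A_{\phi Z}(\cdot)$). The torsion terms coming from \eqref{2.5} and the normal-connection terms all die against the tangential factor $\phi X$ (using $g(QZ,\phi X)=0$ and $g(\phi U,\phi X)=0$), and a single use of the self-adjointness $g(A_NY,T)=g(A_NT,Y)$ puts the result into the stated form. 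No trigonometric factor appears here because no tangential slant component intervenes.

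For the slant identities \eqref{3.6}, \eqref{3.8} and \eqref{3.9} the first step leaves behind one term that is not yet a shape operator, and a second application of $\phi$ is required. In \eqref{3.6} and \eqref{3.8} the surviving piece involves the tangential part $PU$ (respectively $PV$); transferring the derivative and pushing $\phi$ through once more, I substitute $\phi PU=P^2U+QPU=-\cos^2\theta\,U+QPU$ from \eqref{2.12} (using $\eta(U)=0$). In \eqref{3.9}, by contrast, the surviving piece involves the normal part $QU$, and there I substitute $\phi QU=bQU+cQU=-\sin^2\theta\,U-QPU$ from \eqref{2.15}. In each case the second push regenerates a scalar multiple of the very left-hand coefficient being computed, alongside a genuine shape-operator term involving $A_{QPU}$ (and, for \eqref{3.9}, $A_{\phi Z}PU$); transposing that multiple collapses the coefficient to $\sin^2\theta$ in \eqref{3.6} and \eqref{3.8} and to $\cos^2\theta$ in \eqref{3.9}, which is exactly the source of the factors $\csc^2\theta$ and $\sec^2\theta$. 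The leftover terms of the form $g(\xi,\bar\nabla_{(\cdot)}(\cdot))$ are cleared throughout by means of $\bar\nabla_X\xi=X-\eta(X)\xi$ from \eqref{2.4}.

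I expect the main obstacle to be the bookkeeping inside this second $\phi$-step. One must separate cleanly the contribution that equals a scalar multiple of the unknown $g(\nabla_AB,X)$ (and hence is moved across the equation) from the contribution that becomes the intended shape operator, and one must verify that every $\xi$- and $\eta$-term genuinely vanishes under the present hypothesis $\xi\in\Gamma(\mathcal{D}^\bot\oplus\mathcal{D}^\theta)$. In particular, the decision whether to apply \eqref{2.12} to $\phi PU$ or \eqref{2.15} to $\phi QU$ is precisely what determines whether the self-reproducing coefficient is $\cos^2\theta$ or $\sin^2\theta$, so this is the delicate point where the $\csc^2\theta$ identities must be kept carefully distinct from the $\sec^2\theta$ identity.
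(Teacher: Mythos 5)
Your proposal follows essentially the same route as the paper's own proof: rewrite each coefficient as $g(\phi\bar{\nabla}_AB,\phi X)$ (using $\eta(X)=0$), push $\phi$ through with the Kenmotsu identity, convert the normal arguments $\phi Z$, $QU$, $QPU$ by the Weingarten formula and self-adjointness of $A$, and for the slant identities apply (2.12) to $\phi PU$ (resp. (2.15) to $\phi QU$) so that the self-reproducing $\cos^2\theta$ (resp. $\sin^2\theta$) multiple of the unknown can be transposed, which is exactly how the paper produces the $\csc^2\theta$ and $\sec^2\theta$ factors. The one friction point---your method, honestly carried out, lands (3.7) on $-g(A_{\phi Z}\phi X,U)$ rather than the stated $-g(A_{\phi Z}X,U)$---is shared verbatim by the paper, whose proof likewise derives $-g(A_{\phi Z}U,\phi X)$ and then asserts (3.7), so this discrepancy is a typo in the statement rather than a defect of your argument.
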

\begin{proof}
For every $X\in \Gamma(\mathcal{D}^T)$ and $Z,W\in \Gamma(\mathcal{D}^\bot)$, we have
\begin{eqnarray*}
  g(\nabla_ZW,X) &=& g(\phi \bar{\nabla}_ZW,\phi X), \\
  \nonumber &=& g(\bar{\nabla}_Z\phi W,\phi X)-g((\bar{\nabla}_Z\phi) W,\phi X).
\end{eqnarray*}
  Using (\ref{2.5}), (\ref{2.7}) and orthogonality of vector fields in the above equation, we get (\ref{3.5}). Also, for $X\in\Gamma(\mathcal{D}^T)$, $Z\in\Gamma(\mathcal{D}^\bot)$ and $U\in\Gamma(\mathcal{D}^\theta)$, we have
\begin{eqnarray*}
% \nonumber % Remove numbering (before each equation)
  g(\nabla_ZU,X) &=& g(\phi\bar{\nabla}_ZU,\phi X), \\
  \nonumber &=& g(\bar{\nabla}_Z\phi U,\phi X)-g((\bar{\nabla}_Z\phi)U,\phi X),  \\
  \nonumber &=& g(\bar{\nabla}_Z PU,\phi X)+g(\bar{\nabla}_ZQU,\phi X), \\
  \nonumber &=& -g(\bar{\nabla}_Z P^2U,X)-g(\bar{\nabla}_ZQPU,X)+g(\bar{\nabla}_ZQU,\phi X).
\end{eqnarray*}
Using (\ref{2.7}), (\ref{2.12}) and the symmetry of shape operator in the above equation, we obtain
\begin{equation*}
g(\nabla_ZU,X)= \cos^2\theta g(\bar{\nabla}_ZU,X)+ g(A_{QPU}X,Z)-g(A_{QU}\phi X,Z),
\end{equation*}
from which the relation (\ref{3.6}) follows.\\
Again, for $X\in \Gamma(\mathcal{D}^T)$, $Z\in \Gamma(\mathcal{D}^\bot)$ and $U\in \Gamma(\mathcal{D}^\theta)$,  we have
\begin{equation*}
 g(\nabla_UZ,X)=g(\phi \bar{\nabla}_UZ,\phi X)=g(\bar{\nabla}_U\phi Z,\phi X)-g((\bar{\nabla}_U\phi)Z,X).
\end{equation*}
By virtue of (\ref{2.5}) and (\ref{2.7}), the above equation yields
\begin{equation*}
  g(\nabla_UZ,X)=-g(A_{\phi Z}U,\phi X)-g(\phi U,X)\eta(Z),
\end{equation*}
from which the relation (\ref{3.7}) follows. Again we have
\begin{eqnarray*}
  g(\nabla_UV,X) &=& g(\phi \bar{\nabla}_UV,\phi X), \\
  \nonumber &=& g(\bar{\nabla}_U\phi V,\phi X)- g((\bar{\nabla}_U\phi) V,\phi X),\\
  \nonumber &=& g(\bar{\nabla}_UPV,\phi X)+ g(\bar{\nabla}_UQV,\phi X), \\
  \nonumber &=& -g(\bar{\nabla}_UP^2V,X)- g(\bar{\nabla}_UQPV,X)+g(\bar{\nabla}_UQV,\phi X).
\end{eqnarray*}
Using (\ref{2.7}), (\ref{2.12}) and the symmetry of shape operator in the above equation, we get
\begin{equation*}
  g(\nabla_UV,X)=\cos^2\theta g(\bar{\nabla}_UV,X)+g(A_{QPV}X,U)-g(A_{QV}\phi X,U),
\end{equation*}
from which we get (\ref{3.8}).\\
For every $X\in \Gamma(\mathcal{D}^T)$, $Z\in \Gamma(\mathcal{D}^\bot)$ and $U\in\Gamma(\mathcal{D}^\theta)$, we have
\begin{eqnarray*}
  g(\nabla_XZ,U) &=& g(\phi \bar{\nabla}_XZ,\phi U)+\eta(U)g(\bar{\nabla}_XZ,\xi), \\
  \nonumber &=& g(\bar{\nabla}_X\phi Z,\phi U-g((\bar{\nabla}_X\phi)Z,\phi U)-\eta(U)g(Z,\bar{\nabla}_X\xi), \\
  \nonumber &=&  g(\bar{\nabla}_X\phi Z,PU)+g(\bar{\nabla}_X\phi Z,QU)+\eta(Z)g(\phi X,\phi U)-\eta(U)g(X,Z), \\
  \nonumber &=& g(\bar{\nabla}_X\phi Z,PU)-g(\bar{\nabla}_XQU,\phi Z), \\
  \nonumber &=& g(\bar{\nabla}_X\phi Z,PU)+g(\bar{\nabla}_XbQU,Z)+g(\bar{\nabla}_XcQU,Z).
\end{eqnarray*}
In view of (\ref{2.7}), (\ref{2.15}) and the symmetry of shape operator, the above equation reduces to
\begin{equation*}
  g(\nabla_XZ,U)=-g(A_{\phi Z}PU,X)-\sin^2\theta g(\bar{\nabla}_XU,Z)+g(A_{QPU}Z,X),
\end{equation*}
from which the relation (\ref{3.9}) follows.
\end{proof}
\section{Warped product skew CR-submanifolds of the form $M_2\times_fM_T$}
Let $M=M_2\times_fM_T$ be a warped product skew $CR$-submanifold of order $1$ of a Kenmotsu manifold $\bar{M}$ such that $\xi$ is tangent to $M_2=M_\perp\times M_\theta$, where $M_T$, $M_\theta$ and $M_\perp$ are invariant, proper slant and anti-invariant submanifold of $\bar{M}$, respectively. Let the dimensions of these submanifolds are $dim \ M_\bot=d_1$, $dim \ M_\theta=d_2$ and $dim \ M_T=d_3$. If $d_2=0$ then $M$ is a $CR$-warped product of the form $M=M_\bot\times_fM_T$ which have been studied in \cite{UAN}. \\
\indent Now, we construct an example of a non-trivial warped product skew $CR$-submanifold of order $1$ of the form $M=M_2\times_fM_T$.
\begin{example}
Consider the Kenmotsu manifold $M=\mathbb{R}\times_fC^4$ with the structure $(\phi, \xi,\eta,g)$ is given by
\begin{equation*}
\phi \bigg(\sum_{i=1}^{5}(X_i\frac{\partial}{\partial x^i}+Y_i\frac{\partial}{\partial y^i})+Z\frac{\partial}{\partial t}\bigg)=Y_i\frac{\partial}{\partial x^i}-X_i\frac{\partial}{\partial y^i},
\end{equation*}
$\xi=3e^{-t}\frac{\partial}{\partial t}$, $\eta=\frac{1}{3}e^t dt$ and $g=\eta\otimes\eta+\frac{e^{3t}}{9}\displaystyle\sum_{i=1}^{5}(dx^i\otimes dx^i+dy^i\otimes dy^i)$
Now, we consider a submanifold $M$ of $\bar{M}$ defined by the immersion $\chi$ as follows:
\begin{equation*}
\chi(u,v,w,s,\theta,\phi,t)=3(e^{-t}u,0,w,0,2\theta+3\phi,0,e^{-t}v,s,0,3\theta+2\phi,t).
\end{equation*}
Then the local orthonormal frame of $TM$ is spanned by the following:
\begin{equation*}
Z_1=\frac{3}{e^t}(\frac{\partial}{\partial x^1}),\quad Z_2=\frac{3}{e^t}(\frac{\partial}{\partial y^2}),\quad Z_3=3\frac{\partial}{\partial x^3},\quad Z_4=3\frac{\partial}{\partial y^3}
\end{equation*}
\begin{equation*}
Z_5=3(2\frac{\partial}{\partial x^5}+3\frac{\partial}{\partial y^5}),\quad Z_6=3(3\frac{\partial}{\partial x^5}+2\frac{\partial}{\partial y^5}),\quad Z_7=3\frac{\partial}{\partial t}.
\end{equation*}
Also, we have
\begin{equation*}
\phi Z_1=-\frac{3}{e^t}(\frac{\partial}{\partial y^1}), \phi Z_2=\frac{3}{e^t}(\frac{\partial}{\partial x^2}),\quad \phi Z_3=-3\frac{\partial}{\partial y^3}, \phi Z_4=3\frac{\partial}{\partial x^3},
\end{equation*}
\begin{equation*}
\phi Z_5=3(-2\frac{\partial}{\partial y^5}+3\frac{\partial}{\partial x^5}),\quad \phi Z_6=3(-3\frac{\partial}{\partial y^5}+2\frac{\partial}{\partial x^5}), \quad \phi Z_7=0.
\end{equation*}
If we define $D^\bot=span\{Z_1,Z_2,Z_7\}$, $D^\theta=span\{Z_5,Z_6\}$ and $D^T=span\{Z_3,Z_4\}$ then by simple calculations we can say that $D^T$ is an  invariant distribution and $D^\theta$ is a slant distribution with slant angle $\cos^{-1}\frac{5}{13}$. Hence $M$ is a proper skew $CR$-submanifold of $\bar{M}$ of order $1$. Also, it is clear that $D^\bot\oplus D^\theta$ and $D^T$ both are integrable. If we denote the integral manifolds of $D^\bot\oplus D^\theta$ and $D^T$ by $M_2$ and $M_T$ respectively, then the metric tensor $g_M$ of $M$ is given by
\begin{eqnarray*}
% \nonumber % Remove numbering (before each equation)
  g_M &=& (dw^2+ds^2)+13(d\theta^2+d\phi^2)+e^{3t}(dw^2+ds^2) \\
  \nonumber &=& g_{M_2}+e^{3t}(dw^2+ds^2).
\end{eqnarray*}
Thus $M=M_2\times_fM_T$ is a warped product skew $CR$-submanifold of $\bar{M}$ with the warping function $f=\sqrt{e^{3t}}$.
\end{example}
Now, we prove the followings:
\begin{lemma}
Let $M=M_2\times_fM_T$ be a warped product skew $CR$-submanifold of order $1$ of a Kenmotsu manifold $\bar{M}$ such that $\xi$ is tangent to $M_2=M_\perp\times M_\theta$, then we have
\begin{equation}\label{4.3}
\xi \ln f=1,
\end{equation}
\begin{equation}\label{4.4}
g(h(X,Z),\phi W)=0,
\end{equation}
\begin{equation}\label{4.5}
g(h(X,U),\phi Z)=g(h(X,Z),QU)=0,
\end{equation}
and
\begin{equation}\label{4.6}
g(h(X,U),QV)=0
\end{equation}
for every $X\in \Gamma(M_T)$, $Z,W\in\Gamma(M_\perp)$ and $U,V \in \Gamma(M_\theta)$.
\end{lemma}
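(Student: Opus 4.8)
The plan is to dispose of \eqref{4.3} first and then obtain \eqref{4.4}--\eqref{4.6} from a single recurring computation. For \eqref{4.3} I would start from the Kenmotsu identity \eqref{2.4}, which for $X\in\Gamma(M_T)$ reads $\bar{\nabla}_X\xi = X$, since $\eta(X)=g(X,\xi)=0$ because $\xi$ lies in the base $M_2$ while $X$ is tangent to the fibre $M_T$. Comparing the tangential part of this with the Gauss formula \eqref{2.6} and the warped product connection formula \eqref{2.14}, which gives $\nabla_X\xi=(\xi\ln f)X$ (here $\xi\in\Gamma(M_2)$ plays the role of the base factor and $X\in\Gamma(M_T)$ the fibre factor), yields $(\xi\ln f)X=X$ for all nonzero $X$, whence $\xi\ln f=1$; the normal parts incidentally give $h(X,\xi)=0$.

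For the remaining three identities, the first observation is that $\phi Z$ and $QU$ are normal to $M$: $\phi Z\in\Gamma(T^\perp M)$ because $M_\perp$ is anti-invariant, and $QU$ is by definition the normal component of $\phi U$ in \eqref{2.11}. Hence each left-hand side has the form $g(h(X,A),N)=g(\bar{\nabla}_A X,N)$ for a base-direction field $A\in\{Z,U\}$ and a normal field $N$, using the symmetry of $h$ and that $h$ is the normal part of $\bar{\nabla}$. The key manoeuvre is to write $N$ as $\phi(\text{tangent})$ and transfer $\phi$ across the metric by \eqref{2.2}: since the relevant fields are $\phi Z$ and $QU=\phi U-PU$, where the tangential $PU$ piece pairs to zero against $\bar{\nabla}_A X$ (by orthogonality of $M_T$ to $M_\theta$ and normality of $h$), every case reduces to evaluating $g(\phi\,\bar{\nabla}_A X,\cdot\,)$ against a tangent argument in $\Gamma(M_\perp\oplus M_\theta)$.

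The crux is then to establish $\phi\,\bar{\nabla}_A X=\bar{\nabla}_A\phi X$. Writing $\phi\,\bar{\nabla}_A X=\bar{\nabla}_A\phi X-(\bar{\nabla}_A\phi)X$ and invoking \eqref{2.5}, the obstruction is $(\bar{\nabla}_A\phi)X=g(\phi A,X)\xi-\eta(X)\phi A$, and I expect both summands to vanish: $\eta(X)=0$ as in \eqref{4.3}, while $g(\phi A,X)=0$ because $\phi A$ is orthogonal to the invariant fibre $\Gamma(M_T)$ (for $A=Z$ it is purely normal, and for $A=U$ its tangential part $PU$ lies in $\Gamma(M_\theta)$). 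With this vanishing secured, invariance of $M_T$ gives $\phi X\in\Gamma(M_T)$, so \eqref{2.14} applies to yield $\bar{\nabla}_A\phi X=(A\ln f)\phi X+h(A,\phi X)$; pairing this against the surviving tangent argument in $\Gamma(M_\perp\oplus M_\theta)$ annihilates both terms, the first by orthogonality of $\phi X\in\Gamma(M_T)$ to $M_\perp\oplus M_\theta$ and the second by normality of $h$. This delivers \eqref{4.4}, \eqref{4.5} and \eqref{4.6} uniformly.

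I anticipate that the only delicate point is bookkeeping rather than substance: one must split $QU=\phi U-PU$ and $QV=\phi V-PV$ correctly via \eqref{2.11} and confirm that each discarded tangential piece really pairs to zero, and one must keep track of where $\xi$ sits, namely $\xi\in\Gamma(M_\perp\oplus M_\theta)$, so that although $\eta(X)=0$ the scalars $\eta(Z)$ and $\eta(U)$ need not vanish. However, these always appear multiplied by vectors orthogonal to the surviving argument, so they cause no difficulty. No genuine analytic obstacle is expected; the argument is a disciplined sequence of applications of \eqref{2.2}, \eqref{2.5}, \eqref{2.6}, \eqref{2.7} and \eqref{2.14} together with the mutual orthogonality of the distributions $M_T$, $M_\perp$ and $M_\theta$.
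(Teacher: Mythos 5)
Your proposal is correct and follows essentially the same route as the paper: both arguments rest on writing $g(h(X,A),N)=g(\bar{\nabla}_A X,N)$ for a base vector $A$ and normal vector $N$, transferring $\phi$ across the metric via \eqref{2.2}, killing the $(\bar{\nabla}_A\phi)X$ term through \eqref{2.5} together with $\eta(X)=0$ and $g(\phi A,X)=0$, and finishing with the warped product formula \eqref{2.14} and orthogonality of the factors. The only cosmetic differences are that you prove $\xi\ln f=1$ explicitly (the paper defers this to Naghi et al.) and you compute $g(h(X,Z),QU)$ directly by splitting $QU=\phi U-PU$, whereas the paper first establishes the cross-relation $g(h(X,U),\phi Z)=g(h(X,Z),QU)$ via the Weingarten formula and then concludes both vanish.
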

\begin{proof}
The proof of (\ref{4.3}) is similar as in \cite{NAGHI}.\\
Now, for $X\in \Gamma(M_T)$ and $Z\in \Gamma(M_\perp)$, we have
\begin{eqnarray*}
g(h(X,Z),\phi W)&=&g(\bar{\nabla}_ZX,\phi W) \\
  \nonumber &=&-g(\bar{\nabla}_Z\phi X,W)+g((\bar{\nabla}_Z\phi)X,W).
\end{eqnarray*}
Using (\ref{2.5}) and (\ref{2.14}) in the above equation, we obtain
\begin{equation}\label{4.8}
g(h(X,Z),\phi W)=-(Z\ln f)g(\phi X,W)=0.
\end{equation}
Thus, we get (\ref{4.4}).
Again, for $X\in \Gamma(M_T)$,  $Z\in \Gamma(M_\perp)$ and $U\in \Gamma(M_\theta)$, we have
\begin{equation*}
g(h(X,U),\phi Z)=g(\bar{\nabla}_UX,\phi Z)=-g(\bar{\nabla}_U\phi X,Z)+g((\bar{\nabla}_U\phi)X,Z).
\end{equation*}
Using (\ref{2.5}) and (\ref{2.14}), the above equation reduces to
\begin{equation}\label{4.9}
g(h(X,U),\phi Z)=-(U\ln f)g(\phi X,Z)=0.
\end{equation}
Also,
\begin{eqnarray*}
 g(h(X,U),\phi Z) &=& g(\bar{\nabla}_XU,\phi Z), \\
  \nonumber &=& -g(\bar{\nabla}_X\phi U,Z)+g((\bar{\nabla}_X\phi)U,Z), \\
  \nonumber &=&-g(\bar{\nabla}_XPU,Z)-g(\bar{\nabla}_XQU,Z)+g((\bar{\nabla}_X\phi)U,Z).
\end{eqnarray*}
Using (\ref{2.5}), (\ref{2.7}) and (\ref{2.14}) in the above equation, we obtain
\begin{equation}\label{4.10}
g(h(X,U),\phi Z)=g(h(X,Z),QU).
\end{equation}
From (\ref{4.9}) and (\ref{4.10}) we get (\ref{4.5}).
Again, for $X\in \Gamma(M_T)$ and $U,V\in \Gamma(M_\theta)$ we have
\begin{eqnarray*}
g(h(X,U),QV) &=& g(\bar{\nabla}_UX,QV) \\
  \nonumber &=& g(\bar{\nabla}_UX,\phi V)-g(\bar{\nabla}_UX,PV) \\
  \nonumber &=&-g(\bar{\nabla}_U\phi X,V)+g((\bar{\nabla}_U\phi)X,V)-g(\bar{\nabla}_UX,PV).
\end{eqnarray*}
By virtue of (\ref{2.5}) and (\ref{2.14}), the above equation yields
\begin{eqnarray*}
  g(h(X,U),QV) &=& -(U\ln f)g(\phi X,V)+\eta(V)g(\phi U,X)  -(U\ln f)g(X,PV)\\
&=&0.
\end{eqnarray*}
Thus we get (\ref{4.6}).
\end{proof}
\begin{proposition}
Let $M=M_2\times_fM_T$ be a warped product skew CR-submanifold of order 1 of a Kenmotsu manifold $\bar{M}$ such that $\xi$ is tangent
 to $M_2=M_\perp\times M_\theta$, then we have $h(X,E)\in \nu$ for every $X\in\Gamma(M_T)$ and $E\in\Gamma(M_2)$
\end{proposition}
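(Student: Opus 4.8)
The plan is to exploit the orthogonal decomposition of the normal bundle recorded in Section~3, namely $T^\bot M=\phi\mathcal{D}^\bot\oplus Q\mathcal{D}^\theta\oplus\nu$. Since $\phi\mathcal{D}^\bot$ and $Q\mathcal{D}^\theta$ together span the orthogonal complement of $\nu$ in $T^\bot M$, to prove that $h(X,E)\in\nu$ it suffices to show that $h(X,E)$ is orthogonal to every vector of the form $\phi Z$ and $QU$; that is, to verify $g(h(X,E),\phi Z)=0$ and $g(h(X,E),QU)=0$ for all $Z\in\Gamma(M_\perp)$ and $U\in\Gamma(M_\theta)$.

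First I would decompose the arbitrary field $E\in\Gamma(M_2)$ along $TM_2=\mathcal{D}^\bot\oplus\mathcal{D}^\theta\oplus\langle\xi\rangle$, writing $E=Z_0+U_0+a\xi$ with $Z_0\in\Gamma(\mathcal{D}^\bot)$, $U_0\in\Gamma(\mathcal{D}^\theta)$ and $a$ a smooth function, and then use the bilinearity of $h$ to treat the three summands one at a time. The $\xi$-term contributes nothing: by (\ref{2.4}) we have $\bar{\nabla}_X\xi=X-\eta(X)\xi$, and since the right-hand side is tangent to $M$, comparison with the Gauss formula (\ref{2.6}) forces $h(X,\xi)=0\in\nu$.

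The remaining two terms are settled directly by the preceding lemma, whose four identities supply precisely the inner products needed. For $h(X,Z_0)$, relation (\ref{4.4}) gives $g(h(X,Z_0),\phi W)=0$ for every $W\in\Gamma(M_\perp)$, while the second equality in (\ref{4.5}) gives $g(h(X,Z_0),QU)=0$ for every $U\in\Gamma(M_\theta)$; hence $h(X,Z_0)\in\nu$. Likewise, for $h(X,U_0)$, the first equality in (\ref{4.5}) gives $g(h(X,U_0),\phi Z)=0$ and (\ref{4.6}) gives $g(h(X,U_0),QV)=0$ for all $Z\in\Gamma(M_\perp)$ and $V\in\Gamma(M_\theta)$; hence $h(X,U_0)\in\nu$. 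Adding the three contributions yields $h(X,E)\in\nu$.

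Since every required inner product has already been computed in the previous lemma, no genuinely new analytic work is involved. The only point requiring care is the bookkeeping: one must confirm that the list (\ref{4.4})--(\ref{4.6}) exhausts all pairings of $h(X,\cdot)$ with the two non-$\nu$ normal summands $\phi\mathcal{D}^\bot$ and $Q\mathcal{D}^\theta$, and that the $\langle\xi\rangle$-direction is disposed of separately via (\ref{2.4}). I expect this verification of completeness, rather than any computation, to be the (mild) crux of the argument.
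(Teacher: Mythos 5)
Your proof is correct and follows essentially the same route as the paper, which simply declares the result ``obvious from (\ref{4.4}), (\ref{4.5}), (\ref{4.6}) and the fact that $h(X,\xi)=0$''; your write-up merely makes explicit the decomposition $E=Z_0+U_0+a\xi$ and the completeness bookkeeping that the paper leaves implicit.
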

\begin{proof}
The proof is obvious from (\ref{4.4}), (\ref{4.5}),  (\ref{4.6}) and the fact that $h(X,\xi)=0$, for every $X\in \Gamma(M_T)$.
\end{proof}
\begin{lemma}
  Let $M=M_2\times_fM_T$ be a warped product skew CR-submanifold of order 1 of a Kenmotsu manifold $\bar{M}$ such that $\xi$ is tangent to
  $M_2=M_\perp\times M_\theta$, then we have
  \begin{equation}\label{4.11}
  g(h(X,Y),\phi Z)=\{(Z\ln f)-\eta(Z)\}g(X,\phi Y),
  \end{equation}
\begin{equation}\label{4.12}
g(h(X,Y),QU)=\{\eta(U)-(U\ln f)\}g(\phi X,Y)+(PU\ln f)g(X,Y)
\end{equation}
and
\begin{equation}\label{4.13}
g(h(X,Y),QPU)=\cos^2\theta\{\eta(U)-(U\ln f)\}g(X,Y)-(PU\ln f)g(\phi X,Y),
\end{equation}
for every $X,Y\in \Gamma(M_T)$, $Z\in \Gamma(M_\perp)$ and $U\in \Gamma(M_\theta)$.
\end{lemma}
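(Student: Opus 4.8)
The plan is to derive all three identities from one common observation: since $\phi Z$, $QU$, and $QPU$ are all normal to $M$, the Gauss formula (\ref{2.6}) reduces each left-hand side to $g(\bar\nabla_X Y,\cdot\,)$. I would then convert these into tangential quantities by pushing $\phi$ across the metric via the skew-symmetry in (\ref{2.2}), invoking the Kenmotsu covariant-derivative formula (\ref{2.5}) for $(\bar\nabla_X\phi)Y$, and finally eliminating the residual connection terms with the warped-product relation (\ref{2.14}). The recurring simplification is that $Y\in\Gamma(M_T)$ satisfies $\eta(Y)=0$, so (\ref{2.5}) collapses $(\bar\nabla_X\phi)Y$ to $g(\phi X,Y)\xi$.

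For (\ref{4.11}) I would write $g(h(X,Y),\phi Z)=g(\bar\nabla_X Y,\phi Z)=-g(\phi\bar\nabla_X Y,Z)$ and expand $\phi\bar\nabla_X Y=\bar\nabla_X\phi Y-(\bar\nabla_X\phi)Y$. The term $(\bar\nabla_X\phi)Y$ contributes the $\eta(Z)$ part, while $-g(\bar\nabla_X\phi Y,Z)$ is evaluated by differentiating the identity $g(\phi Y,Z)=0$ (valid because $\phi Y\in\Gamma(M_T)$ is a fibre vector and $Z\in\Gamma(M_2)$ a base vector) and applying (\ref{2.14}) in the form $\nabla_X Z=(Z\ln f)X$; the antisymmetry $g(X,\phi Y)=-g(\phi X,Y)$ then packages the outcome into the stated shape.

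Equation (\ref{4.12}) follows the same template after splitting $QU=\phi U-PU$: the $\phi U$ part is treated exactly as above, producing the $\eta(U)$ and $(U\ln f)$ terms, while the tangential correction $-g(\bar\nabla_X Y,PU)=-g(\nabla_X Y,PU)$ is computed by differentiating $g(Y,PU)=0$ and using $\nabla_X PU=(PU\ln f)X$, which yields the $(PU\ln f)g(X,Y)$ term. For (\ref{4.13}) I would avoid a fresh calculation and instead substitute $PU$ for $U$ in (\ref{4.12}); this is legitimate since the slant distribution is $P$-invariant, so $PU\in\Gamma(M_\theta)$. The identities $\eta(PU)=0$, together with $P^2U=\cos^2\theta(-U+\eta(U)\xi)$ from (\ref{2.12}) and $\xi\ln f=1$ from (\ref{4.3}), then transform the substituted right-hand side into the claimed expression. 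I expect no substantial analytic obstacle here; the only real care needed throughout is bookkeeping, namely assigning the base/fibre roles correctly in (\ref{2.14}) (the warping function lives on $M_2$, so $X\in\Gamma(M_T)$ always plays the fibre role) and consistently tracking the $\phi$-antisymmetry signs.
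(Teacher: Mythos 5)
Your proposal is correct and follows essentially the same route as the paper: Gauss formula, skew-symmetry of $\phi$, the Kenmotsu identity (\ref{2.5}) with $\eta(Y)=0$, the warped-product relation (\ref{2.14}), and for (\ref{4.13}) the substitution $U\mapsto PU$ in (\ref{4.12}) combined with (\ref{2.12}), $\eta(PU)=0$ and $\xi\ln f=1$. Your treatment of the last step is in fact slightly more explicit than the paper's, which cites only (\ref{2.12}) while using (\ref{4.3}) tacitly.
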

\begin{proof}
  For every $X,Y\in \Gamma(M_T)$ and $Z\in \Gamma(M_\perp)$, we have
  \begin{eqnarray*}
 g(h(X,Y),\phi Z) &=&g(\bar{\nabla}_XY,\phi Z) \\
 &=& -g(\bar{\nabla}_X\phi Y,Z)+g((\bar{\nabla}_X\phi)Y,Z) \\
 &=& g(\phi Y,\bar{\nabla}_XZ)+\eta(Z)g(\phi X,Y).
 \end{eqnarray*}
Using (\ref{2.14}) in the above equation, we obtain
\begin{equation*}
 g(h(X,Y),\phi Z)=(Z\ln f)g(X,\phi Y)+\eta(Z)g(\phi X,Y),
\end{equation*}
from which the relation (\ref{4.11}) follows.

Also, for every  $X,Y\in \Gamma(M_T)$ and $U\in \Gamma(M_\theta)$, we have
\begin{eqnarray*}
g(h(X,Y),QU) &=& g(\bar{\nabla}_XY,\phi U)-g(\bar{\nabla}_XY,PU) \\
   &=&-g(\bar{\nabla}_X\phi Y,U)+g((\bar{\nabla}_X\phi)Y,U)+g(\bar{\nabla}_XPU,Y).
\end{eqnarray*}
Using (\ref{2.5}) and (\ref{2.14}) in the above equation, we obtain
\begin{equation*}
g(h(X,Y),QU)=(U\ln f)g(X,\phi Y)+\eta(U)g(\phi X,Y)+(PU\ln f)g(X,Y),
\end{equation*}
from which the relation (\ref{4.12}) follows.
Also, replacing $U$ by $PU$ in (\ref{4.12}) and using (\ref{2.12}), we get (\ref{4.13}).
\end{proof}

Now, replacing $X$ by $\phi X$ and $Y$ by $\phi Y$ in (\ref{4.11}), we obtain the following:
\begin{equation}\label{4.14}
g(h(\phi X,Y),\phi Z)=\{(Z\ln f)-\eta(Z)\}g(X,Y),
\end{equation}
\begin{equation}\label{4.15}
g(h(X,\phi Y),\phi Z)=\{\eta(Z)-(Z\ln f)\}g(X,Y)
\end{equation}
and
\begin{equation}\label{4.16}
g(h(\phi X,\phi Y),\phi Z)=\{(Z\ln f)-\eta(Z)\}g(X,\phi Y).
\end{equation}
Also, replacing $X$ by $\phi X$ and $Y$ by $\phi Y$ in (\ref{4.12}), we get the following:
\begin{equation}\label{4.17}
g(h(\phi X,Y),QU)=\{\eta(U)-(U\ln f)\}g(X,Y)+(PU\ln f)g(\phi X,Y).
\end{equation}
\begin{equation}\label{4.18}
g(h(X,\phi Y),QU)=-\{\eta(U)-(U\ln f)\}g(X,Y)-(PU\ln f)g(\phi X,Y)
\end{equation}
and
\begin{equation}\label{4.19}
g(h(\phi X,\phi Y),QU)=\{\eta(U)-(U\ln f)\}g(\phi X,Y)+(PU\ln f)g(X,Y).
\end{equation}
\begin{corollary}
Let $M=M_2\times_fM_T$ be a warped product skew CR-submanifold of order 1 of a Kenmotsu manifold $\bar{M}$ such that $\xi$ is tangent to $M_2$ and
  $M_2=M_\perp\times M_\theta$, then we have\\
(i) $g(h(\phi X,Y),\phi Z)=-g(h(X,\phi Y),\phi Z)$,\\ (ii) $g(h(\phi X,\phi Y),\phi Z)=g(h(X,Y),\phi Z)$,\\
 (iii) $g(h(\phi X,Y),QU)=-g(h(X,\phi Y), QU)$,\\ and (iv) $g(h(\phi X,\phi Y),QU)=g(h(X,Y),QU)$.
\end{corollary}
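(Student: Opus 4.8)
The plan is to obtain all four identities as immediate algebraic consequences of the six displayed relations \eqref{4.14}--\eqref{4.19}, which were already produced by substituting $\phi X$ and $\phi Y$ into the two master formulas \eqref{4.11} and \eqref{4.12}. No fresh geometric input is needed: the work is entirely a matter of comparing right-hand sides and tracking signs, the signs themselves coming from the skew-symmetry $g(\phi X,Y)=-g(X,\phi Y)$ recorded in \eqref{2.2}.

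For parts (i) and (ii) I would use the three relations carrying $\phi Z$. Reading off \eqref{4.14} and \eqref{4.15}, both right-hand sides are scalar multiples of $g(X,Y)$ with coefficients $\{(Z\ln f)-\eta(Z)\}$ and $\{\eta(Z)-(Z\ln f)\}$ respectively; since these coefficients are negatives of one another, part (i) follows at once. For part (ii) I would compare \eqref{4.16} with \eqref{4.11}: both right-hand sides equal $\{(Z\ln f)-\eta(Z)\}\,g(X,\phi Y)$, so the two components of the second fundamental form coincide.

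For parts (iii) and (iv) the argument is parallel, now using the relations carrying $QU$. The right-hand side of \eqref{4.18} is visibly the negative of that of \eqref{4.17}, which yields (iii); and the right-hand side of \eqref{4.19} agrees term-for-term with that of \eqref{4.12}, which yields (iv).

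Since every ingredient is already in place, I do not anticipate a genuine obstacle: the statement is a bookkeeping corollary. The only point demanding a little care is the sign bookkeeping in (i) and (iii), where one must check that moving $\phi$ from one factor to the other flips precisely the $g(X,Y)$- and $g(\phi X,Y)$-type terms while leaving the structure of the coefficients intact; this is exactly what the antisymmetry of $\phi$ noted above guarantees.
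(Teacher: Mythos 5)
Your proposal is correct and follows exactly the paper's own proof: the paper likewise derives (i) from \eqref{4.14} and \eqref{4.15}, (ii) from \eqref{4.11} and \eqref{4.16}, (iii) from \eqref{4.17} and \eqref{4.18}, and (iv) from \eqref{4.12} and \eqref{4.19}, treating all four identities as immediate comparisons of right-hand sides. Your sign-tracking remarks are accurate and simply make explicit what the paper leaves to the reader.
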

\begin{proof}
  The relation (i) follows from (\ref{4.14}) and (\ref{4.15}). \\ The relation (ii) follows from (\ref{4.11}) and (\ref{4.16}).\\
  The relation (iii) follows from (\ref{4.17}) and (\ref{4.18}).\\The relation (iv) follows from (\ref{4.12}) and (\ref{4.19}).
\end{proof}

\maketitle
\section{Characterization of Skew CR-warped products of the form $M_2\times_f M_T $}

Now, we obtain a characterization for a  proper skew CR-warped product submanifold of order 1 of the form $M=M_2\times_fM_T$ such that $M_2=M_\perp\times M_\theta$ of a Kenmotsu manifold $\bar{M}$.
 \begin{theorem}
Let $M$ be a proper skew CR-submanifold of order 1 of a Kenmotsu manifold $\bar{M}$ such that $\xi$ is orthogonal to the invariant distribution $\mathcal{D}^T$, then $M$ is locally a warped product skew CR-submanifold if and only if
\begin{equation}\label{5.1}
A_{\phi Z}X=\{\eta(Z)-(Z\mu)\}\phi X,
\end{equation}
\begin{equation}\label{5.2a}
A_{QU}X=\{\eta(U)-(U\mu)\}\phi X+(PU\mu)X,
\end{equation}
and
\begin{equation}\label{5.3a}
(\xi\mu)=1
\end{equation}
for every $X\in\Gamma(\mathcal{D}^T)$,  $Z\in \Gamma(\mathcal{D}^\bot)$, $U\in \Gamma(\mathcal{D}^\theta)$ and for some smooth function $\mu$ on $M$ satisfying $Y(\mu)=0$, for any $Y\in \Gamma(\mathcal{D}^T)$.
\end{theorem}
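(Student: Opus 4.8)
The plan is to prove the two implications separately, with the function $\mu$ playing throughout the role of $\ln f$. For the necessity I would assume $M=M_2\times_fM_T$ and set $\mu=\ln f$. Because the warping function is defined on the base $M_2$ it is constant along the fibre $M_T$, so $Y\mu=0$ for $Y\in\Gamma(\mathcal{D}^T)$, and \eqref{4.3} gives \eqref{5.3a} at once. To obtain \eqref{5.1} I would determine $A_{\phi Z}X$ componentwise: its $\mathcal{D}^T$-part is recovered from \eqref{4.11} via $g(A_{\phi Z}X,Y)=g(h(X,Y),\phi Z)$, while \eqref{4.4}, \eqref{4.5} and $h(X,\xi)=0$ annihilate the $\mathcal{D}^\bot$-, $\mathcal{D}^\theta$- and $\xi$-parts; assembling these yields \eqref{5.1}. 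Equation \eqref{5.2a} is produced identically from \eqref{4.12}, using \eqref{4.5}, \eqref{4.6} and $h(X,\xi)=0$ to kill the components of $A_{QU}X$ lying outside $\mathcal{D}^T$.

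For the sufficiency I would assume \eqref{5.1}, \eqref{5.2a}, \eqref{5.3a} and $Y\mu=0$, aiming to invoke Hiepko's Theorem. The first move is to extract two auxiliary identities on $\mathcal{D}^\theta$. Since $A_{QU}$ is self-adjoint, symmetrising \eqref{5.2a} in $X,Y\in\Gamma(\mathcal{D}^T)$ forces $\{\eta(U)-(U\mu)\}g(\phi X,Y)=0$, hence $\eta(U)=U\mu$; moreover $\mathcal{D}^\theta$ is the $(-\cos^2\theta)$-eigenspace of $P^2$ whereas $P\xi=0$, so $\xi\perp\mathcal{D}^\theta$ and therefore $U\mu=\eta(U)=0$ and $PU\mu=\eta(PU)=0$ for every $U\in\Gamma(\mathcal{D}^\theta)$. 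Feeding \eqref{5.1}, \eqref{5.2a} and these identities into \eqref{3.5}--\eqref{3.8} makes every right-hand side vanish, while the $\xi$-directions are controlled by \eqref{2.4} and $h(\cdot,\xi)=0$; this shows $g(\nabla_EF,X)=0$ for all $E,F\in\Gamma(\mathcal{D}^\bot\oplus\mathcal{D}^\theta\oplus\langle\xi\rangle)$ and $X\in\Gamma(\mathcal{D}^T)$, i.e.\ $\mathcal{D}_2:=\mathcal{D}^\bot\oplus\mathcal{D}^\theta\oplus\langle\xi\rangle$ is a totally geodesic foliation. Integrability of $\mathcal{D}^T$ then follows from \eqref{3.3}--\eqref{3.4} together with the same relations.

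Next I would compute the $\mathcal{D}_2$-component of $\nabla_XY$ for $X,Y\in\Gamma(\mathcal{D}^T)$ from \eqref{3.1}, \eqref{3.2} and \eqref{2.4}: using \eqref{5.1} it equals $-(Z\mu)g(X,Y)$ along $\mathcal{D}^\bot$, it vanishes along $\mathcal{D}^\theta$ (since $A_{QU}X=0$ by the auxiliary relations), and it equals $-g(X,Y)$ along $\xi$ by \eqref{2.4} and \eqref{5.3a}. Collecting these shows the second fundamental form of each leaf of $\mathcal{D}^T$ is $-g(X,Y)\boldsymbol{\nabla}\mu$, so the leaves are totally umbilical with mean curvature vector $-\boldsymbol{\nabla}\mu\in\Gamma(\mathcal{D}_2)$. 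Finally, using the symmetry of the Hessian, the identity $Y\mu=0$ on $\mathcal{D}^T$, and the totally geodesic property of $\mathcal{D}_2$ (which forces $(\nabla_EX)_{\mathcal{D}_2}=0$), one checks that $g(\nabla_X\boldsymbol{\nabla}\mu,E)=0$ for all $E\in\Gamma(\mathcal{D}_2)$, so $\boldsymbol{\nabla}\mu$ is parallel in the normal bundle of the leaves and $\mathcal{D}^T$ is a spherical foliation. Hiepko's Theorem then yields a local isometry $M\cong M_2\times_fM_T$ with $f=e^{\mu}$, and \eqref{5.3a} reproduces $\xi\ln f=1$.

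I expect the genuine difficulty to lie entirely in the sufficiency direction, and specifically in the slant block: the integrability of $\mathcal{D}^T$ and the umbilicity computation only close up once the auxiliary identities $\eta(U)=U\mu$ and $U\mu=PU\mu=0$ have been squeezed out of the self-adjointness of the shape operator and the eigenspace orthogonality $\xi\perp\mathcal{D}^\theta$. Verifying the spherical (parallel mean curvature) condition, rather than mere umbilicity, is the other step that must be handled with care before Hiepko's Theorem can be applied.
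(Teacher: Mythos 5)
Your proposal is correct, and structurally it is the same proof as the paper's: for necessity you take $\mu=\ln f$ and assemble $A_{\phi Z}X$ and $A_{QU}X$ componentwise from \eqref{4.4}--\eqref{4.6}, \eqref{4.11}, \eqref{4.12} and $h(X,\xi)=0$, exactly as the paper does; for sufficiency you feed the hypotheses into Lemma 3.2, Corollary 3.1 and Lemma 3.1 to get a totally geodesic foliation tangent to $\mathcal{D}^\bot\oplus\mathcal{D}^\theta\oplus\langle\xi\rangle$ and a totally umbilical, integrable $\mathcal{D}^T$ with parallel leaf mean curvature $-\boldsymbol{\nabla}\mu$, and then invoke Hiepko's theorem; your Hessian-symmetry verification of the spherical condition is the paper's parallelism computation in cleaner packaging.

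The one genuine difference is your auxiliary identities $\eta(U)=U\mu$, hence $U\mu=PU\mu=0$ on $\mathcal{D}^\theta$, squeezed out of the self-adjointness of $A_{QU}$ and the orthogonality $\xi\perp\mathcal{D}^\theta$. These do not appear in the paper, and the paper's argument actually needs them: substituting \eqref{5.2a} into \eqref{3.4}, the $\{\eta(U)-(U\mu)\}$-terms cancel by skew-symmetry but the $(PU\mu)$-terms reinforce, leaving
\begin{equation*}
g([X,Y],U)=2\csc^2\theta\,(PU\mu)\,g(X,\phi Y),
\end{equation*}
which for $Y=\phi X$ equals $-2\csc^2\theta\,(PU\mu)\,\|X\|^2$ and is not zero unless $PU\mu=0$; the paper nonetheless asserts $g([X,Y],U)=0$ at this step without justification. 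Your symmetrisation argument (legitimate because $\mathcal{D}^T$ is $\phi$-invariant and nontrivial, so $Y=\phi X$ is an admissible choice) forces $\eta(U)=U\mu$, and applying that identity to $PU\in\Gamma(\mathcal{D}^\theta)$ together with $\eta(PU)=0$ yields $PU\mu=0$, which closes the gap. Two remarks: first, your identities show that \eqref{5.2a} is secretly the statement $A_{QU}X=0$, which is consistent with the necessity direction, where symmetry of $h$ applied to \eqref{4.12} likewise forces $\eta(U)=U\ln f$ and $PU\ln f=0$; second, note that the paper itself does not need the auxiliary identities in the totally geodesic and umbilicity steps (there the $(PU\mu)$-contributions cancel on their own), so the only place they are indispensable is precisely the integrability of $\mathcal{D}^T$. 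In short, your proof matches the paper's and in addition repairs the one step the paper leaves unjustified.
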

\begin{proof}
Let $M=M_2\times_f M_T$ be a proper warped product skew CR-submanifold of order 1 of a Kenmotsu manifold $\bar{M}$ such that $M_2=M_\perp\times M_\theta$. We denote the tangent space of $M_T$, $M_\perp$ and $M_\theta$ by $\mathcal{D}^T$, $\mathcal{D}^\bot$ and $\mathcal{D}^\theta$, respectively. Then from (\ref{4.4}) and from (\ref{4.5}), we have
\begin{equation}\label{5.2}
A_{\phi Z}X\perp \mathcal{D}^\bot
\end{equation}
and
\begin{equation}\label{5.3}
A_{\phi Z}X\perp \mathcal{D}^\theta
\end{equation}
  for every $X\in \Gamma(\mathcal{D}^T)$ and $Z\in \Gamma(\mathcal{D}^\bot)$ respectively. Also since $h(B,\xi)=0$, for every $B\in \Gamma(TM)$, we have
\begin{equation}\label{5.4}
g(A_{\phi Z}X,\xi)=g(h(X,\xi),\phi Z)=0.
\end{equation}
 From (\ref{5.2}), (\ref{5.3}) and (\ref{5.4}), we can say that
 \begin{equation}\label{5.5}
A_{\phi Z}X\in \Gamma(\mathcal{D}^T).
 \end{equation}
 From (\ref{4.11}) and (\ref{5.5}), we get (\ref{5.1}). Also from (\ref{4.5}), we have
 \begin{equation}\label{5.6}
 A_{QU}X\perp \mathcal{D}^\bot,
\end{equation}
for every $X\in \Gamma(\mathcal{D}^T)$ and $U\in \Gamma(\mathcal{D}^\theta)$, and from (\ref{4.6}), we have
\begin{equation}\label{5.7}
A_{QU}X\perp\mathcal{D}^\theta
\end{equation}
for every $X\in \Gamma(\mathcal{D}^\bot)$ and $U\in \Gamma(\mathcal{D}^\theta)$.\\  From (\ref{5.4}), (\ref{5.6}) and (\ref{5.7}), we can say that
\begin{equation}\label{5.8}
A_{QU}X\in \Gamma(\mathcal{D}^T),
\end{equation}
for every $X\in \Gamma(\mathcal{D}^T)$ and $U\in \Gamma(\mathcal{D}^\theta)$. The relation (\ref{5.2a}) follows from
(\ref{4.12}) and (\ref{5.8}) and also (\ref{5.3a}) follows from (\ref{4.3}).\\
\indent Conversely, let $M$ be a proper skew CR-submanifold of order $1$ of a Kenmotsu manifold $\bar{M}$ such that (\ref{5.1})-(\ref{5.3a}) holds. Then from (\ref{3.5}), (\ref{3.7}) and (\ref{5.1}), we have
\begin{equation}\label{5.9}
g(\nabla_ZW,X)=0
\end{equation}
and
\begin{equation}\label{5.10}
g(\nabla_UZ,X)=0
\end{equation}
 for every $X\in \Gamma(\mathcal{D}^T)$ and $Z,W\in \Gamma(\mathcal{D}^\bot)$. Also, from (\ref{3.6}), (\ref{3.8}) and (\ref{5.2a}), we have
\begin{equation}\label{5.11}
g(\nabla_ZU,X)=0
\end{equation}
and
\begin{equation}\label{5.12}
g(\nabla_UV,X)=0
\end{equation}
 for every $X\in \Gamma(\mathcal{D}^T)$,  $Z\in \Gamma(\mathcal{D}^\bot)$ and $U,V\in \Gamma(\mathcal{D}^\theta)$.  Hence
from (\ref{5.9})-(\ref{5.12}), we can conclude that
\begin{equation*}
g(\nabla_EF,X)=0
\end{equation*}
for every $E,F\in \Gamma(\mathcal{D}^\bot\oplus\mathcal{D}^\theta+\{\xi\})$ and $X\in \Gamma(\mathcal{D}^T)$.\\ Therefore, the leaves of $\mathcal{D}^\bot\oplus\mathcal{D}^\theta+\{\xi\}$ are totally geodesic in $M$.\\
Now, from (\ref{3.3}) and (\ref{5.1}), we have
\begin{equation}\label{5.13}
g([X,Y],Z)=0
\end{equation}
for every $X,Y\in \Gamma(\mathcal{D}^T)$ and $Z\in \Gamma(\mathcal{D}^\bot)$,  Also from (\ref{3.4}) and (\ref{5.2a}), we have
\begin{equation}\label{5.14}
g([X,Y],U)=0
\end{equation}
for every $X,Y\in \Gamma(\mathcal{D}^T)$ and $U\in \Gamma(\mathcal{D}^\theta)$.\\
 Since $h(A,\xi)=0$ for every $A\in \Gamma(TM)$, we have from (\ref{5.13}) and (\ref{5.14}) that
\begin{equation*}
g([X,Y],E)=0
\end{equation*}
for every $X,Y\in \Gamma(\mathcal{D}^T)$ and $E\in \Gamma(\mathcal{D}^\bot\oplus\mathcal{D}^\theta+\{\xi\})$.
Consequently the distribution $\mathcal{D}^T$ is integrable.\\ Next, we consider the integrable manifold $M_T$ of $\mathcal{D}^T$ and let $h^T$ be the second fundamental form of $M_T$ in $M$. Then for any $X,Y\in \Gamma(\mathcal{D}^T)$, we have from (\ref{3.1}) that
\begin{eqnarray}\label{5.15}
  g(h^T(X,Y),Z) &=& g(\nabla_XY,Z) \\
  \nonumber &=& g(A_{\phi Z}X,\phi Y)-\eta(Z)g(X,Y).
\end{eqnarray}
By virtue of (\ref{5.1}), (\ref{5.15}) yields
\begin{equation}\label{5.16}
g(h^T(X,Y),Z)=-(Z\mu)g(X,Y).
\end{equation}
Similarly for any $X,Y\in \Gamma(\mathcal{D}^T)$ and $U\in \Gamma(\mathcal{D}^\theta)$, we have from (\ref{3.2}) that
\begin{eqnarray}\label{5.17}
  &&g(h^T(X,Y),U = g(\nabla_XY,U) \\
  \nonumber &=& \csc^2\theta[g(A_{QU}X,\phi Y)-g(A_{QPU}X,Y)]-\eta(U)g(X,Y).
\end{eqnarray}
In view of (\ref{5.2a}), (\ref{5.17}) reduces to
\begin{eqnarray}\nonumber
g(h^T(X,Y),U)&=& \csc^2\theta[\{\eta(U)-(U\mu)\}g(\phi X,\phi Y)+(PU\mu)g(X,\phi Y) \\
\nonumber &-& \cos^2\theta\{\eta(U)-(U\mu)\}g(X,Y)+(PU\mu)g(\phi X, Y)]-\eta(U)g(X,Y) \\
\label{5.18}&=& -(U\mu)g(X,Y).
\end{eqnarray}
Also for any $X,Y\in \Gamma(\mathcal{D}^T)$, we have
\begin{equation*}
g(h^T(X,Y),\xi)=g(\nabla_XY,\xi)= -g(Y,\bar{\nabla}_X\xi)=-g(X,Y).
\end{equation*}
Using (\ref{5.3a}) in the above equation we obtain
\begin{equation}\label{5.19}
g(h^T(X,Y),\xi)=-(\xi\mu)g(X,Y).
\end{equation}
From (\ref{5.16}), (\ref{5.18}) and (\ref{5.19}), we conclude that
\begin{equation*}
  g(h^T(X,Y),E)=-g(\boldsymbol{\nabla}\mu,E)g(X,Y)
\end{equation*}
for every $X,Y\in \Gamma(\mathcal{D}^T)$ and $E\in \Gamma(\mathcal{D}^\bot\oplus\mathcal{D}^\theta\oplus\{\xi\})$.
Consequently, $M_T$ is totally umbilical in $\bar{M}$ with mean curvature vector $H^T=-\boldsymbol{\nabla}\mu$.\\
Finally, we show that $H^T$ is parallel with respect to the normal connection $D^N$ of $M_T$ in $M$. We take $E\in \Gamma(\mathcal{D}^\bot\oplus\mathcal{D}^\theta\oplus\{\xi\})$ and $X\in \Gamma(\mathcal{D}^T)$, then we have
\begin{equation*}
g(D^N_X\boldsymbol{\nabla}\mu,E)=g(\nabla_X\boldsymbol{\nabla}^\bot\mu,Z)+g(\nabla_X\boldsymbol{\nabla}^\theta_\mu,U)
+g(\nabla_X\boldsymbol{\nabla}^\xi_\mu,\xi),
\end{equation*}
where $\boldsymbol{\nabla}^\bot\mu$, $\boldsymbol{\nabla}^\theta\mu$ and $\boldsymbol{\nabla}^\xi\mu$ are the gradient components of $\mu$
on $M$ along $\mathcal{D}^\bot$, $\mathcal{D}^\theta$ and $\{\xi\}$ respectively. Then by the property of Riemannian metric, the above equation reduces to
\begin{eqnarray*}
  g(D^N_X\boldsymbol{\nabla}\mu,E) &=& Xg(\boldsymbol{\nabla}^\bot\mu,Z)-g(\boldsymbol{\nabla}^\bot\mu,\nabla_XZ)+Xg(\boldsymbol{\nabla}^\theta\mu,U) \\
  \nonumber &&-g(\boldsymbol{\nabla}^\theta\mu,\nabla_XU)+Xg(\boldsymbol{\nabla}^\xi\mu,\xi)-g(\boldsymbol{\nabla}^\xi\mu,\nabla_X\xi) \\
  \nonumber &=& X(Z\mu)-g(\boldsymbol{\nabla}^\bot\mu,[X,Z])-g(\boldsymbol{\nabla}^\bot\mu,\nabla_ZX) \\
 \nonumber  && +X(U\mu)-g(\boldsymbol{\nabla}^\theta\mu,[X,U])-g(\boldsymbol{\nabla}^\theta\mu,\nabla_UX) \\
  \nonumber &&+X(\xi\mu)-g(\boldsymbol{\nabla}^\xi\mu,[X,\xi])-g(\boldsymbol{\nabla}^\xi\mu,\nabla_\xi X) \\
  \nonumber &=& Z(X\mu)+g(\nabla_Z\boldsymbol{\nabla}^\bot\mu,X)+U(X\mu)+g(\nabla_U\boldsymbol{\nabla}^\theta\mu,X) \\
  \nonumber && +\xi(X\mu)+g(\nabla_\xi\boldsymbol{\nabla}^\xi\mu,X) \\
  \nonumber &=& 0,
\end{eqnarray*}
since $(X\mu)=0$, for any $X\in \Gamma(\mathcal{D}^T)$ and $ \nabla_Z\boldsymbol{\nabla}^\bot\mu+\nabla_U\boldsymbol{\nabla}^\theta\mu+\nabla_\xi\boldsymbol{\nabla}^\xi\mu=\nabla_E\boldsymbol{\nabla}\mu$
is orthogonal to $\mathcal{D}^T$ for any $E\in \Gamma(\mathcal{D}^\bot\oplus\mathcal{D}^\theta\oplus\{\xi\})$ as $\boldsymbol{\nabla}\mu$
is the gradient along $M_2$ and $M_2$ is totally geodesic in $\bar{M}$. Therefore, the mean curvature vector $H^T$ of $M_T$ is parallel. Thus, $M_T$
is an extrinsic sphere in $M$. Hence by Theorem 2.2, $M$ is  locally a warped product submanifold. Thus the proof is complete.
\end{proof}
\begin{corollary}
  Let $M$ be a proper skew CR-submanifold of order $1$ of a Kenmotsu manifold $\bar{M}$ such that $\xi$ is tangent to the anti-invariant distribution
  $\mathcal{D}^\bot,$ then $M$ is locally a warped product submanifold if and only if\\
  (i)$A_{\phi Z}X=\{(Z\mu)-\eta(Z)\}\phi X$,\\ (ii) $A_{Q U}X=(PU\mu)X-(U\mu)\phi X$\\ (iii) $(\xi\mu)=1$,\\
  for every $X\in \Gamma(\mathcal{D}^T), \  Z\in \Gamma(\mathcal{D}^\bot), \ U\in\Gamma(\mathcal{D}^\theta) $ and for some smooth function $\mu$
  on $M$ satisfying $Y(\mu)=0$, for any $Y\in \Gamma(\mathcal{D}^T)$.
\end{corollary}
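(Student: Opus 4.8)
The plan is to obtain this corollary as the special case $\xi\in\Gamma(\mathcal{D}^\bot)$ of Theorem 5.1, rather than to rerun the whole argument. First I would observe that in the orthogonal splitting $TM=\mathcal{D}^T\oplus\mathcal{D}^\bot\oplus\mathcal{D}^\theta\oplus\langle\xi\rangle$ the hypothesis $\xi\in\Gamma(\mathcal{D}^\bot)$ already forces $\xi\perp\mathcal{D}^T$, since $\mathcal{D}^\bot$ and $\mathcal{D}^T$ are mutually orthogonal. Thus the hypothesis of Theorem 5.1 holds verbatim, and $M$ is locally a warped product skew CR-submanifold if and only if (\ref{5.1}), (\ref{5.2a}) and (\ref{5.3a}) are satisfied for some smooth $\mu$ with $Y(\mu)=0$ on $\mathcal{D}^T$. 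It then suffices to rewrite these three relations under the additional information that $\xi$ lies entirely inside $\mathcal{D}^\bot$.

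The decisive simplification is in (\ref{5.2a}). For any proper slant vector $U\in\Gamma(\mathcal{D}^\theta)$ the orthogonality $\mathcal{D}^\theta\perp\mathcal{D}^\bot$ together with $\xi\in\Gamma(\mathcal{D}^\bot)$ yields $\eta(U)=g(U,\xi)=0$; substituting $\eta(U)=0$ into (\ref{5.2a}) collapses the coefficient of $\phi X$ to $-(U\mu)$, which is exactly condition (ii). Condition (\ref{5.3a}) involves only $\xi$ and is reproduced unchanged as (iii), while condition (i) is the transcription of (\ref{5.1}). Because Theorem 5.1 is an iff statement built from the warped-product relations (\ref{4.3})--(\ref{4.6}) and (\ref{4.11})--(\ref{4.13}) (which assume nothing more than $\xi\perp\mathcal{D}^T$), each direction of the corollary is inherited from the corresponding direction of the theorem once these substitutions are made; in the converse direction the integrability of $\mathcal{D}^T$, the totally geodesic foliation by $\mathcal{D}^\bot\oplus\mathcal{D}^\theta\oplus\langle\xi\rangle$, and the extrinsic-sphere property of the $\mathcal{D}^T$-leaves, concluded through Hiepko's Theorem 2.2, transfer automatically.

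The one point that genuinely requires attention is the behaviour of the $\eta$-terms, which is where the present hypothesis differs structurally from a symmetric treatment. Under the corollary's assumption it is $\mathcal{D}^\bot$, not $\mathcal{D}^\theta$, that carries $\xi$, so $\eta(Z)=g(Z,\xi)$ is generically nonzero for $Z\in\Gamma(\mathcal{D}^\bot)$ and must be retained throughout condition (i), whereas $\eta(U)$ is forced to vanish and is cleanly dropped from condition (ii). I would therefore resist handling $\mathcal{D}^\bot$ and $\mathcal{D}^\theta$ on the same footing: the asymmetry in how $\eta$ acts on the two distributions is precisely what separates this corollary from the companion case $\xi\in\Gamma(\mathcal{D}^\theta)$, and keeping that bookkeeping straight is the only real care the argument demands.
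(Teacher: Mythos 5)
Your strategy --- deriving the corollary as the specialization $\xi\in\Gamma(\mathcal{D}^\bot)$ of Theorem 5.1 --- is exactly the paper's intended route: the paper supplies no separate proof, and your handling of the hypothesis (orthogonality of $\mathcal{D}^\bot$ and $\mathcal{D}^T$ forces $\xi\perp\mathcal{D}^T$), of condition (ii) (substituting $\eta(U)=0$ into \eqref{5.2a}), and of condition (iii) (identical to \eqref{5.3a}) is correct.

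However, there is a genuine gap at the one remaining step: your assertion that condition (i) ``is the transcription of \eqref{5.1}'' is false. Equation \eqref{5.1} reads $A_{\phi Z}X=\{\eta(Z)-(Z\mu)\}\phi X$, while the corollary's (i) reads $A_{\phi Z}X=\{(Z\mu)-\eta(Z)\}\phi X$; these differ by an overall sign, and precisely because $\eta(Z)$ does not vanish here (as you yourself emphasize), the discrepancy is not cosmetic. Your argument therefore proves the corollary with (i) carrying the sign of \eqref{5.1}, not the sign as printed. Nor can the flip be absorbed by re-choosing $\mu$: conditions (ii)--(iii) pin down $U\mu=U\ln f$ and $PU\mu=PU\ln f$ on $\mathcal{D}^\theta$ together with $\xi\mu=1$, whereas the printed (i) would force $Z\mu=2\eta(Z)-(Z\ln f)$ on $\mathcal{D}^\bot$; since $d\eta=0$ on a Kenmotsu manifold, a single smooth $\mu$ with this differential exists only when $d\ln f|_{\mathcal{D}^\theta}$ is closed, i.e. only when $\ln f$ splits as a sum of functions on $M_\bot$ and $M_\theta$, which fails for a generic warped product --- so with the printed (i) the ``only if'' direction of the equivalence would be false. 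Comparing with \eqref{4.11}, with Theorem 5.1 itself, and with Corollary 6.1 (Theorem 3.1 of \cite{UAN}, the case $\dim M_\theta=0$, where the sign $\{\eta(Z)-(Z\mu)\}$ reappears), the printed sign in (i) --- and the identical flip in Corollary 5.2(i) --- is evidently an error in the paper, and what you proved is the corrected statement. A careful proof must either derive the sign as printed (which \eqref{4.11} rules out) or explicitly flag the mismatch; declaring (i) a transcription of \eqref{5.1} silently passes over the only point where the statement and the theorem disagree.
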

\begin{corollary}
  Let $M$ be a proper Skew CR-submanifold of order $1$ of a Kenmotsu manifold $\bar{M}$ such that $\xi$ is tangent to the slant distribution $\mathcal{D}^\theta$, then $M$ is locally a warped product submanifold if and only if \\ (i) $A_{\phi Z}X=(Z\mu)\phi X$,\\
   (ii) $A_{Q U}X=\{\eta(U)-(U\mu)\}\phi X+(PU\mu)X$\\ (iii) $(\xi\mu)=1$,\\ for every $X\in \Gamma(\mathcal{D}^T), \  Z\in \Gamma(\mathcal{D}^\bot), \ U\in\Gamma(\mathcal{D}^\theta) $ and for some smooth function $\mu$
  on $M$ satisfying $Y(\mu)=0$ for any $Y\in \Gamma(\mathcal{D}^T)$.
\end{corollary}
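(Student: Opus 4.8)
The plan is to obtain this corollary as the special case $\xi\in\Gamma(\mathcal{D}^\theta)$ of Theorem 5.1. First I would verify that the hypothesis of that theorem is satisfied: the eigenspace distributions $\mathcal{D}^T$, $\mathcal{D}^\bot$, $\mathcal{D}^\theta$ and the line spanned by $\xi$ are mutually orthogonal, so locating $\xi$ inside $\mathcal{D}^\theta$ still keeps it orthogonal to the invariant distribution $\mathcal{D}^T$. Hence Theorem 5.1 applies without modification, and $M$ is locally a warped product skew CR-submanifold if and only if (\ref{5.1}), (\ref{5.2a}) and (\ref{5.3a}) hold for some smooth $\mu$ with $Y(\mu)=0$ on $\mathcal{D}^T$.

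The second step is to track which of the $\eta$-terms in (\ref{5.1})--(\ref{5.2a}) survive once $\xi\in\Gamma(\mathcal{D}^\theta)$. Since $\mathcal{D}^\bot$ and $\mathcal{D}^\theta$ are orthogonal, for every $Z\in\Gamma(\mathcal{D}^\bot)$ we have $\eta(Z)=g(Z,\xi)=0$; substituting this into (\ref{5.1}) removes the $\eta(Z)$ contribution and reduces that relation to condition (i). On the other hand, $U$ and $\xi$ now both lie in $\mathcal{D}^\theta$, so $\eta(U)=g(U,\xi)$ is in general nonzero and must be retained; thus (\ref{5.2a}) passes over to condition (ii) with its $\eta(U)$ term intact. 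Finally, (\ref{5.3a}) is verbatim condition (iii).

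The only point demanding attention---and what distinguishes this corollary from Corollary 5.1---is precisely this bookkeeping of $g(Z,\xi)$ and $g(U,\xi)$: when $\xi$ sits in $\mathcal{D}^\bot$ it is $\eta(U)$ that vanishes and $\eta(Z)$ that survives, whereas here the situation is exactly reversed. There is no genuine analytic obstacle, because the full if-and-only-if equivalence is already furnished by Theorem 5.1; one has only to confirm that the orthogonality relations among $\mathcal{D}^\bot$, $\mathcal{D}^\theta$ and $\xi$ collapse the general conditions to the stated forms (i)--(iii), after which the corollary is immediate.
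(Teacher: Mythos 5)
Your route is exactly the paper's route: the paper offers no separate proof of this corollary, treating it precisely as you do, namely as the specialization of Theorem 5.1 to $\xi\in\Gamma(\mathcal{D}^\theta)$. Your preliminary check is fine ($\xi\in\Gamma(\mathcal{D}^\theta)$ is indeed orthogonal to $\mathcal{D}^T$, so the theorem applies), and so is your observation that $\eta(U)$ must be retained while $\eta(Z)=g(Z,\xi)=0$ for $Z\in\Gamma(\mathcal{D}^\bot)$.

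However, the step you yourself single out as the crux --- the sign bookkeeping --- is where the argument, as written, fails. Substituting $\eta(Z)=0$ into \eqref{5.1}, i.e.\ into
\begin{equation*}
A_{\phi Z}X=\{\eta(Z)-(Z\mu)\}\phi X,
\end{equation*}
yields $A_{\phi Z}X=-(Z\mu)\phi X$, whereas condition (i) of the corollary reads $A_{\phi Z}X=+(Z\mu)\phi X$. These are not the same statement, and the discrepancy cannot be absorbed by replacing $\mu$ with $-\mu$: conditions (ii) and (iii) fix the sign of $\mu$ along $\mathcal{D}^\theta$ and $\langle\xi\rangle$ (under $\mu\mapsto-\mu$, \eqref{5.3a} would become $(\xi\mu)=-1$). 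Concretely, the necessity direction of Theorem 5.1, which rests on \eqref{4.11} together with $g(A_{\phi Z}X,Y)=g(h(X,Y),\phi Z)$, produces $\mu=\ln f$ and hence $A_{\phi Z}X=-(Z\ln f)\phi X$ here; a single function whose derivative is $-(Z\ln f)$ on $\mathcal{D}^\bot$ but $+(U\ln f)$ on $\mathcal{D}^\theta\oplus\langle\xi\rangle$ need not exist, so the printed condition (i) is genuinely different from what Theorem 5.1 specializes to. In fairness, the slip originates in the paper itself: Corollary 5.1(i) shows the same sign disagreement with \eqref{5.1}. A careful write-up must therefore either state (i) as $A_{\phi Z}X=-(Z\mu)\phi X$ (what the theorem actually gives) or explicitly flag and correct the sign; it cannot simply assert, as your proposal does, that the substitution ``reduces that relation to condition (i).''
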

%%%%%%%%%%%%%%%%%%%%%%%%%%%%%%%%%%%%%%%%%%%%%%%%%%%%%%%%%%%%%%%%
\section{Generalized inequalities on warped product skew CR-submanifolds}
In this section, we establish two inequalities on a warped product skew CR-submanifold $M=M_2\times_f M_T$ of a Kenmotsu manifold $\bar{M}$ such that
$M_2=M_\perp\times M_\theta$. We take $dim M_T=2p$, $dim M_\perp=q$, $dim M_\theta=2s+1$ and their corresponding tangent spaces are $\mathcal{D}^T$, $\mathcal{D}^\perp$ and $\mathcal{D}^\theta\oplus\{\xi\}$ respectively.

Assume that $\{e_1,e_2,\cdots,e_p,e_{p+1}=\phi e_1,\cdots,e_{2p}=\phi e_p\}$,\\
$\{e_{2p+1}=e^{*}_1,\cdots,e_{2p+q}=e_q^*\}$ and $\{e_{2p+q+1}=\hat{e}_1,e_{2p+q+2}=\hat{e}_2,\cdots,e_{2p+q+s}=\hat{e}_s,e_{2p+q+s+1}=\hat{e}_{s+1}=\sec\theta P\hat{e}_1,\cdots,e_{2p+q+2s}=\hat{e}_{2s}=\sec\theta P\hat{e}_s,e_{2p+q+2s+1}=\hat{e}_{2s+1}=\xi\}$ are local orthonormal frames of $\mathcal{D}^T$, $D^\bot$ and $D^\theta \oplus \{\xi\}$  respectively.\\
Then the local orthonormal frames for $\phi D^\bot$, $QD^\theta$ and $\nu$ are $\{e_{n+1}=\tilde{e_1}=\phi e_1^*,\cdots,e_{n+q}=\tilde{e_q}=\phi e_q^*\}$, $\{e_{n+q+1}=\tilde{e}_{q+1}=\csc \theta Q \hat{e}_1,\cdots,e_{n+q+s}=\tilde{e}_{q+s}=\csc \theta Q\hat{e}_s,e_{n+q+s+1}=\tilde{e}_{q+s+1}=\csc\theta \sec\theta QP\hat{e}_1,\cdots,e_{n+q+2s}=\tilde{e}_{q+2s}=\csc\theta\sec \theta Q P\hat{e}_s \}$ and $\{e_{n+q+2s+1},\cdots,e_{2m+1}\}$, respectively.
Clearly $dim\ \nu=(2m+1-n-q-2s)$. \\
Now, we have the following inequalities:
\begin{theorem}
Let $M=M_2\times_fM_T$ be a warped product skew $CR$-submanifold of order 1 of a Kenmotsu manifold $\bar{M}$ such that $\xi$ is tangent to $M_\theta$, where $M_2=M_\bot\times M_\theta$, then the squared norm of the second fundamental form satisfies
\begin{equation}\label{6.1}
\|h\|^2\geq 2p[\parallel \boldsymbol{\nabla}^\bot \ln f \parallel^2+(\csc^2\theta+\cot^2\theta)\{\parallel \boldsymbol{\nabla}^\theta \ln f \parallel^2-1\}],
\end{equation}
where $\boldsymbol{\nabla}^\bot\ln f$ and $\boldsymbol{\nabla}^\theta\ln f$ are the gradient of $\ln f$ along $M_\bot$ and $M_\theta$, respectively and for the case of equality, $M_2$ becomes totally geodesic and $M_T$ becomes totally umbilical in $\bar{M}$.
\end{theorem}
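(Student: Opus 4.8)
The plan is to follow the standard Chen-type scheme for warped product inequalities. First I would expand
\[
\|h\|^2=\sum_{r=n+1}^{2m+1}\sum_{i,j=1}^{n}g(h(e_i,e_j),e_r)^2
\]
in the adapted orthonormal frames fixed above, and split the normal bundle as $T^\bot M=\phi\mathcal{D}^\bot\oplus Q\mathcal{D}^\theta\oplus\nu$. Since every summand is non-negative, I would retain only the blocks with both tangent arguments in $\mathcal{D}^T$ and normal argument in $\phi\mathcal{D}^\bot$ or $Q\mathcal{D}^\theta$, discarding the $\nu$-components and every term having at least one argument in $M_2$. Because $\xi$ is tangent to $M_\theta$, we have $\eta(Z)=0$ for $Z\in\mathcal{D}^\bot$, which simplifies (\ref{4.11}). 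Throughout, the repeated factor $2p$ comes from the invariant frame $\{e_1,\dots,e_p,\phi e_1,\dots,\phi e_p\}$: writing $\omega_{ab}=g(\phi e_a,e_b)$, one has $\sum_{a,b}\omega_{ab}^2=2p=\sum_{a,b}\delta_{ab}^2$, while $\sum_{a,b}\omega_{ab}\delta_{ab}=0$ since $\omega$ is skew and hence trace-free; these three identities are what make all cross terms drop out once the squares are expanded.

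For the $\phi\mathcal{D}^\bot$ block I would substitute (\ref{4.11}) with $\eta(Z)=0$ to get $g(h(e_a,e_b),\phi e_k^{*})=(e_k^{*}\ln f)\,g(e_a,\phi e_b)$; squaring, summing over $a,b$ via the identities above and then over $k$ yields exactly $2p\sum_{k}(e_k^{*}\ln f)^2=2p\,\|\boldsymbol{\nabla}^\bot\ln f\|^2$, the first term of (\ref{6.1}).

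The main work is the $Q\mathcal{D}^\theta$ block, which I expect to be the principal obstacle. Here I would use the normalized normal frame $\csc\theta\,Q\hat e_i$ and $\csc\theta\sec\theta\,QP\hat e_i$ (of unit length by (\ref{2.12}) and (\ref{2.15}), which give $|P\hat e_i|=\cos\theta$ and $|Q\hat e_i|=\sin\theta$), and evaluate $h$ on these using (\ref{4.12}) and (\ref{4.13}) with $U=\hat e_i$, abbreviating $\alpha_i=\hat e_i\ln f$ and $\beta_i=P\hat e_i\ln f$. After squaring and summing over $a,b$ (again only the $\omega^2$ and $\delta^2$ sums survive), the two sub-blocks contribute $2p\csc^2\theta(\alpha_i^2+\beta_i^2)$ and $2p(\cot^2\theta\,\alpha_i^2+\csc^2\theta\sec^2\theta\,\beta_i^2)$ respectively. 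The delicate point is to reconcile the resulting $\beta_i^2$-coefficient $\csc^2\theta(1+\sec^2\theta)$ with the target coefficient $(\csc^2\theta+\cot^2\theta)\sec^2\theta$; these agree precisely because $\cot^2\theta\,\sec^2\theta=\csc^2\theta$, so the whole block collapses to $2p(\csc^2\theta+\cot^2\theta)\sum_i(\alpha_i^2+\sec^2\theta\,\beta_i^2)$. Finally I would recognize $\sum_i(\alpha_i^2+\sec^2\theta\,\beta_i^2)=\|\boldsymbol{\nabla}^\theta\ln f\|^2-(\xi\ln f)^2$ and invoke (\ref{4.3}) to replace $(\xi\ln f)^2$ by $1$; this is the source of the subtracted $1$, reflecting that $\xi$ contributes to $\|\boldsymbol{\nabla}^\theta\ln f\|^2$ but, since $Q\xi=0$, not to the $Q\mathcal{D}^\theta$ block. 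Adding the two blocks gives (\ref{6.1}).

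For the equality case I would trace back which non-negative terms were discarded: equality in (\ref{6.1}) forces (i) $h(E,F)=0$ for $E,F\in\Gamma(M_2)$, (ii) $h(X,E)=0$ for $X\in\Gamma(M_T)$, $E\in\Gamma(M_2)$ (by Proposition 4.1 these already lie in $\nu$), and (iii) the $\nu$-component of $h(X,Y)$ vanishing for $X,Y\in\Gamma(M_T)$. Condition (i) says precisely that $M_2$ is totally geodesic in $\bar M$, since $M_2$ is totally geodesic in $M$ as the base of the warped product. Combining (ii) with the total umbilicity of the fibre $M_T$ in $M$ (whose mean curvature is $-\boldsymbol{\nabla}\ln f$) then yields that $M_T$ is totally umbilic in $\bar M$, completing the equality discussion.
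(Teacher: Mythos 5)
Your proposal is correct and follows essentially the same route as the paper's proof: the same adapted-frame expansion of $\|h\|^2$ with the normal bundle split as $\phi\mathcal{D}^\bot\oplus Q\mathcal{D}^\theta\oplus\nu$, discarding the same non-negative blocks, evaluating the $\mathcal{D}^T\times\mathcal{D}^T$ blocks via (4.11)--(4.13), using $\xi\ln f=1$ to produce the subtracted $1$, and the same traced-back analysis of discarded terms for the equality case. The only cosmetic difference is that you organize the invariant-frame sums through the identities $\sum_{a,b}\omega_{ab}^2=\sum_{a,b}\delta_{ab}^2=2p$ and $\sum_{a,b}\omega_{ab}\delta_{ab}=0$, where the paper instead splits into sub-blocks over $\{e_i\}$ and $\{\phi e_i\}$ and invokes its Corollary 4.1 --- the same computation in a slightly cleaner bookkeeping.
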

\begin{proof}
From (\ref{2.8}), we have
\begin{equation*}
\|h\|^2=\sum_{i,j=1}^{n} g(h(e_i,e_j), h(e_i,e_j))=\sum_{r=n+1}^{2m+1} g(h(e_i,e_j), e_r)^2.
\end{equation*}
Decomposing the above relation for our constructed frames, we get
\begin{eqnarray}\label{6.2}
&&\|h\|^2= \sum_{r=n+1}^{2m+1}\sum_{i,j=2p+1}^{q}g(h(e_i^*,e_j^*),e_r)^2+2\sum_{r=n+1}^{2m+1}\sum_{i=2p+1}^{q} \sum_{j=1}^{2s+1} g(h(e_i^*,\hat{e_j}),e_r)^2 \\
\nonumber&&+\sum_{r=n+1}^{2m+1}\sum_{i,j=1}^{2s+1}g(h(\hat{e_i},\hat{e_j}),e_r)^2 +2\sum_{r=n+1}^{2m+1}\sum_{i=1}^{q} \sum_{j=1}^{2p} g(h(e_i^*,e_j),e_r)^2 \\
\nonumber &&+2\sum_{r=n+1}^{2m+1}\sum_{i=1}^{2s+1} \sum_{j=1}^{2p} g(h(\hat{e_i},e_j),e_r)^2+\sum_{r=n+1}^{2m+1}\sum_{i,j=1}^{2p} g(h(e_i,e_j),e_r)^2.
\end{eqnarray}
Now, again decomposing (\ref{6.2}) along the normal subbundles $\phi \mathcal{D}^\bot$, $Q\mathcal{D}^\theta$ and $\nu$, we get
\begin{eqnarray}\label{6.3}
&& \qquad  \|h\|^2=\sum_{r=n+1}^{n+q}\sum_{i,j=2p+1}^{q}g(h(e_i^*,e_j^*),e_r)^2 \\
\nonumber && \qquad +\sum_{r=n+q+1}^{n+2s}\sum_{i,j=2p+1}^{q}g(h(e_i^*,e_j^*),e_r)^2+\sum_{r=n+2s+1}^{2m+1}\sum_{i,j=2p+1}^{q}g(h(e_i^*,e_j^*),e_r)^2 \\
\nonumber&& \qquad +2\sum_{r=n+1}^{n+q}\sum_{i=2p+1}^{q}\sum_{j=1}^{2s+1} g(h(e_i^*,\hat{e}_j),e_r)^2 +2\sum_{r=n+q+1}^{n+2s}\sum_{i=2p+1}^{q}\sum_{j=1}^{2s+1} g(h(e_i^*,\hat{e}_j),e_r)^2 \\
\nonumber&& \qquad +2\sum_{r=n+2s+1}^{2m+1}\sum_{i=2p+1}^{q}\sum_{j=1}^{2s+1} g(h(e_i^*,\hat{e}_j),e_r)^2+\sum_{r=n+1}^{n+q}\sum_{i,j=1}^{2s+1}g(h(\hat{e}_i,\hat{e}_j),e_r)^2\\
\nonumber && \qquad +\sum_{r=n+q+1}^{n+2s}\sum_{i,j=1}^{2s+1}g(h(\hat{e}_i,\hat{e}_j),e_r)^2+\sum_{r=n+2s+1}^{2m+1}\sum_{i,j=1}^{2s+1}g(h(\hat{e}_i,\hat{e}_j),e_r)^2
\end{eqnarray}
\begin{eqnarray}
\nonumber&&\qquad +2\sum_{r=n+1}^{n+q}\sum_{i=1}^{q}\sum_{j=1}^{2p} g(h({e}_i^*,{e}_j),e_r)^2+2\sum_{r=n+q+1}^{n+2s}\sum_{i=1}^{q}\sum_{j=1}^{2p} g(h({e}_i^*,{e}_j),e_r)^2 \\
\nonumber&&\qquad +2\sum_{r=n+2s+1}^{2m+1}\sum_{i=1}^{q}\sum_{j=1}^{2p} g(h({e}_i^*,{e}_j),e_r)^2+2\sum_{r=n+1}^{n+q}\sum_{i=1}^{2s+1}\sum_{j=1}^{2p} g(h(\hat{e}_i,{e}_j),e_r)^2 \\
\nonumber&& \qquad+2\sum_{r=n+q+1}^{n+2s}\sum_{i=1}^{2s+1}\sum_{j=1}^{2p} g(h(\hat{e}_i,{e}_j),e_r)^2+2\sum_{r=n+2s+1}^{2m+1}\sum_{i=1}^{2s+1}\sum_{j=1}^{2p} g(h(\hat{e}_i,{e}_j),e_r)^2 \\
\nonumber && \qquad+\sum_{r=n+1}^{n+q}\sum_{i,j=1}^{2p} g(h({e}_i,{e}_j),e_r)^2+\sum_{r=n+q+1}^{n+2s}\sum_{i,j=1}^{2p}g(h(e_i,e_j),e_r)^2 \\
\nonumber && \qquad  +\sum_{r=n+2s+1}^{2m+1}\sum_{i,j=1}^{2p}g(h(e_i,e_j),e_r)^2.
\end{eqnarray}
Now, by Proposition 4.1, the tenth, eleventh, thirteenth and fourteenth terms of (\ref{6.3}) are equal to zero. Also, we can not find any relation for a warped product in the form $g(h(E,F),\nu)$ for any $E,F\in \Gamma(TM)$. So, leaving the positive third, sixth, ninth, twelfth, fifteenth and eighteenth terms of (\ref{6.3}). Thus, we get
\begin{eqnarray}\label{6.4}
\nonumber \|h\|^2&\geq& \sum_{r=1}^{q}\sum_{i,j=2p+1}^{q}g(h(e_i^*,e_j^*),\phi e_r^*)^2+\sum_{r=1}^{2s}\sum_{i,j=2p+1}^{q}g(h(e_i^*,e_j^*),\tilde{e}_r)^2  \\
   &+&2\sum_{r=1}^{q}\sum_{i=2p+1}^{q}\sum_{j=1}^{2s+1}g(h(e_i^*,\hat{e}_j),\phi e_r^*)^2+2\sum_{r=1}^{s}\sum_{i=2p+1}^{q}\sum_{j=1}^{2s+1}g(h(e_i^*,\hat{e}_j),\tilde{e}_r)^2 \\
  \nonumber &+& \sum_{r=1}^{q}\sum_{i,j=1}^{2s+1}g(h(\hat{e}_i,\hat{e}_j),\phi e_r^*)^2+\sum_{r=1}^{s}\sum_{i,j=1}^{2s+1}g(h(\hat{e}_i,\hat{e}_j),\tilde{e}_r)^2  \\
  \nonumber&+& \sum_{r=1}^{q}\sum_{i,j=1}^{2p}g(h(e_i,e_j),\phi e_r^*)^2+\sum_{r=1}^{2s}\sum_{i,j=1}^{2p}g(h(e_i,e_j),\tilde{e}_r)^2.
\end{eqnarray}
Also, we have no relation for a warped product of the forms $g(h(Z,W),\phi\mathcal{D}^\bot)$, $g(h(Z,W), Q\mathcal{D}^\theta)$, $g(h(Z,U),\phi \mathcal{D}^\bot)$, $g(h(Z,U), Q\mathcal{D}^\theta)$, $g(h(U,V),\phi Z)$ and $g(h(U,V), Q\mathcal{D}^\theta)$ for any $Z,\ W\in \Gamma(\mathcal{D}^\bot)$, $U\in \Gamma(\mathcal{D}^\theta\oplus \{\xi\})$. So, we leave these terms from (\ref{6.4}) and obtain
\begin{equation}\label{6.5}
 \|h\|^2\geq \sum_{r=1}^{q}\sum_{i,j=1}^{2p}g(h(e_i,e_j),\phi e_r^*)^2+\sum_{r=1}^{2s}\sum_{i,j=1}^{2p}g(h(e_i,e_j),\tilde{e}_r)^2.
\end{equation}
Now,
\begin{eqnarray*}
  \sum_{r=1}^{q}\sum_{i,j=1}^{2p}g(h(e_i,e_j),\phi e_r^*)^2 &=& \sum_{r=1}^{q}\sum_{i=1}^{p}g(h(e_i,\phi e_j), \phi e_r^*)^2+\sum_{r=1}^{q}\sum_{i=1}^{p}g(h(\phi e_i,e_j), \phi e_r^*)^2 \\
  \nonumber&+&\sum_{r=1}^{q}\sum_{i=1}^{p}g(h(\phi e_i,\phi e_j), \phi e_r^*)^2.
\end{eqnarray*}
Using Corollary 4.1$\big((i)\ and \ (ii)\big)$, the above relation reduces to
\begin{eqnarray}\label{6.6}
\sum_{r=1}^{q}\sum_{i,j=1}^{2p}g(h(e_i,e_j),\phi e_r^*)^2&=& 2\sum_{r=1}^{q}\sum_{i=1}^{p}g(h(\phi e_i,e_j),\phi e_r^*)^2 \\
\nonumber &+&2\sum_{r=1}^{q}\sum_{i=1}^{p}g(h(e_i,e_j),\phi e_r^*)^2.
\end{eqnarray}
By virtue of (\ref{4.11}), (\ref{6.6}) yields
\begin{eqnarray}\label{6.7}
  \sum_{r=1}^{q}\sum_{i,j=1}^{2p}g(h(e_i,e_j),\phi e_r^*)^2 &=& 2\sum_{r=1}^{q}\sum_{i,j=1}^{p}\{\eta(e_r^*)-e_r^*\ln f\}^2g(e_i,e_j)^2 \\
  \nonumber &+& 2\sum_{r=1}^{q}\sum_{i,j=1}^{p}\{\eta(e_r^*)-e_r^*\ln f\}^2g(e_i,\phi e_j)^2.
\end{eqnarray}
Now, since $\eta(e_r^*)=0$ for every $r=1,2,\cdots,q$ and $g(e_i,\phi e_j)=0$ for every $i,j=1,2,\cdots,p$ so (\ref{6.7}) turns into
\begin{equation}\label{6.8}
\sum_{r=1}^{q}\sum_{i,j=1}^{2p}g(h(e_i,e_j),\phi e_r^*)^2=2p\sum_{r=1}^{q}(e_r^*\ln f)^2=2p\|\boldsymbol{\nabla}^\bot \ln f\|^2.
\end{equation}
On the other hand,
\begin{eqnarray*}
  &&\sum_{r=1}^{2s}\sum_{i.j=1}^{2p}g(h(e_i,e_j),\tilde{e_r})^2\\ &&= \csc^2\theta\sum_{r=1}^{s}\sum_{i,j=1}^{2p}g(h(e_i,e_j),Q\hat{e}_r)^2
 +\sec^2\theta\csc^2\theta\sum_{r=1}^{s}\sum_{i,j=1}^{2p}g(h(e_i,e_j),QP\hat{e_r})^2  \\
  \nonumber\qquad &&=\csc^2\theta\sum_{r=1}^{s}\sum_{i,j=1}^{p}g(h(e_i,e_j),Q\hat{e}_r)^2+\csc^2\theta\sum_{r=1}^{s}\sum_{i,j=1}^{p}g(h(e_i,\phi e_j),Q\hat{e}_r)^2  \\
  \nonumber && +\csc^2\theta\sum_{r=1}^{s}\sum_{i,j=1}^{p}g(h(\phi e_i,e_j),Q\hat{e}_r)^2+\csc^2\theta\sum_{r=1}^{s}\sum_{i,j=1}^{p}g(h(\phi e_i,\phi e_j),Q\hat{e}_r)^2 \\
  \nonumber && +\sec^2\theta\csc^2\theta\sum_{r=1}^{s}\sum_{i,j=1}^{p}g(h(e_i,e_j),QP\hat{e}_r)^2+\sec^2\theta\csc^2\theta\sum_{r=1}^{s}\sum_{i,j=1}^{p}g(h(e_i,\phi e_j),QP\hat{e}_r)^2 \\
  \nonumber && +\sec^2\theta\csc^2\theta\sum_{r=1}^{s}\sum_{i,j=1}^{p}g(h(\phi e_i,e_j),QP\hat{e}_r)^2+\sec^2\theta\csc^2\theta\sum_{r=1}^{s}\sum_{i,j=1}^{p}g(h(\phi e_i,\phi e_j),QP\hat{e}_r)^2.
\end{eqnarray*}
Using Corollary 4.1, $\big((iii)\ and \ (iv)\big)$, (\ref{4.12}), (\ref{4.13}) and the fact that $g(e_i,\phi e_j)=0$ for every $i,j=1,2,\cdots,p$ in the above relation, we obtain
\begin{eqnarray*}
  &&\sum_{r=1}^{2s}\sum_{i,j=1}^{2p}g(h(e_i,e_j),\tilde{e}_r)^2 =2p\csc^2\theta\sum_{r=1}^{s}(P\hat{e}_r\ln f)^2+2p\csc^2\theta\sum_{r=1}^{s}\{\eta(\hat{e}_r)-(\hat{e}_r\ln f)\}^2  \\
  \nonumber &&  \\
  \nonumber &&\qquad +2p\sec^2\theta\csc^2\theta\cos^4\theta\sum_{r=1}^{s}\{\eta(\hat{e}_r)-(\hat{e}_r\ln f)\}^2 + 2p\sec^2\theta\csc^2\theta\sum_{r=1}^{s}(P\hat{e}_r\ln f)^2 \\
  \nonumber&&
\end{eqnarray*}
Since $\eta(\hat{e}_r)=0$ for every $r=1,2,\cdots,s$, the above equation reduces to
\begin{eqnarray*}
  \sum_{r=1}^{2s}\sum_{i,j=1}^{2p}g(h(e_i,e_j),\tilde{e}_r)^2 &=& 2p\cot^2\theta\sum_{r=1}^{s}(\sec\theta P\hat{e}_r\ln f)^2+2p\csc^2\theta\sum_{r=1}^{s}(\hat{e}_r\ln f)^2 \\
   &+&2p\cot^2\theta\sum_{r=1}^{s}(\hat{e}_r\ln f)^2+2p\csc^2\theta\sum_{r=1}^{s}(\sec\theta P\hat{e}_r\ln f)^2 \\
   &=& 2p\cot^2\theta\sum_{r=1}^{2s}(\hat{e}_r\ln f)^2+2p\csc^2\theta\sum_{r=1}^{2s}(\hat{e}_r\ln f)^2 \\
   &=& 2p(csc^2\theta+\cot^2\theta)\left\{\sum_{r=1}^{2s+1}(\hat{e}_r\ln f)^2-(\xi \ln f)^2\right\}.
\end{eqnarray*}
Using (\ref{2.10}) and (\ref{4.3}) in the above equation, we get
\begin{equation}\label{6.9}
\sum_{r=1}^{2s}\sum_{i,j=1}^{2p}g(h(e_i,e_j),\tilde{e}_r)^2=2p(\csc^2\theta+\cot^2\theta)\{\|\boldsymbol{\nabla}^\theta \ln f\|^2-1\}.
\end{equation}
using (\ref{6.8}) and (\ref{6.9}) in (\ref{6.5}), we get the inequality (\ref{6.1}).\\ If the inequality of (\ref{6.1}) holds, then by leaving third term of (\ref{6.3}), we get $g(h(\mathcal{D}^\bot,\mathcal{D}^\bot),\nu)=0$, which implies that
\begin{equation}\label{6.10}
h(\mathcal{D}^\bot,\mathcal{D}^\bot)\perp \nu.
\end{equation}
Also, by leaving the first and second term of (\ref{6.4}), we get $h(\mathcal{D}^\bot,\mathcal{D}^\bot)\perp \phi \mathcal{D}^\bot$ and $h(\mathcal{D}^\bot,\mathcal{D}^\bot)\perp Q\mathcal{D}^\theta$ respectively. Therefore
\begin{equation}\label{6.11}
h(\mathcal{D}^\bot,\mathcal{D}^\bot)\subseteq \nu.
\end{equation}
From (\ref{6.10}) and (\ref{6.11}), we obtain
\begin{equation}\label{6.12}
  h(\mathcal{D}^\bot,\mathcal{D}^\bot)=0.
\end{equation}
Similarly by leaving sixth term of (\ref{6.3}), we get
\begin{equation}\label{6.13}
h(\mathcal{D}^\bot,\mathcal{D}^\theta)\perp \nu.
\end{equation}
Also, leaving the third and fourth term of (\ref{6.4}), we get $h(\mathcal{D}^\bot,\mathcal{D}^\theta)\perp \phi \mathcal{D}^\bot$ and $h(\mathcal{D}^\bot,\mathcal{D}^\theta)\perp Q \mathcal{D}^\theta$ respectively. Therefore,
\begin{equation}\label{6.14}
h(\mathcal{D}^\bot,\mathcal{D}^\theta)\subseteq \nu.
\end{equation}
From (\ref{6.13}) and (\ref{6.14}), we obtain
\begin{equation}\label{6.15}
h(\mathcal{D}^\bot,\mathcal{D}^\theta)=0.
\end{equation}
Again, by leaving ninth term of (\ref{6.3}), we get
\begin{equation}\label{6.16}
h(\mathcal{D}^\theta,\mathcal{D}^\theta)\perp \nu.
\end{equation}
Also, leaving fifth and sixth term of (\ref{6.4}), we get $h(\mathcal{D}^\theta,\mathcal{D}^\theta)\perp \phi \mathcal{D}^\bot $ and \\ $h(\mathcal{D}^\theta,\mathcal{D}^\theta)\perp Q \mathcal{D}^\theta $ respectively. Therefore,
\begin{equation}\label{6.17}
h(\mathcal{D}^\theta,\mathcal{D}^\theta)\subseteq \nu.
\end{equation}
From (\ref{6.16}) and (\ref{6.17}), we obtain
\begin{equation}\label{6.18}
h(\mathcal{D}^\theta,\mathcal{D}^\theta)=0.
\end{equation}
Next by leaving the twelfth term of (\ref{6.3}), we get
\begin{equation}\label{6.19}
h(\mathcal{D}^\bot,\mathcal{D}^T)\perp \nu.
\end{equation}
From (\ref{6.19}) and Proposition 4.1, we get
\begin{equation}\label{6.20}
h(\mathcal{D}^\bot,\mathcal{D}^T)=0.
\end{equation}
Also, leaving fifteenth term of (\ref{6.3}), we get
\begin{equation}\label{6.21}
h(\mathcal{D}^\theta,\mathcal{D}^T)\perp \nu.
\end{equation}
From (\ref{6.21}) and Proposition 4.1, we get
\begin{equation}\label{6.22}
h(\mathcal{D}^\theta,\mathcal{D}^T)=0.
\end{equation}
Thus from (\ref{6.12}), (\ref{6.15}), (\ref{6.18}), (\ref{6.20}), (\ref{6.22}) and the fact that $M_2$ is totally geodesic in $M$ (\cite{BISHOP},\cite{CHENCR1}), we conclude that
$M_2$ is totally geodesic in $\bar{M}$. Next by leaving the eighteenth term of (\ref{6.3}), we get
\begin{equation}\label{6.23}
h(\mathcal{D}^T,\mathcal{D}^T)\perp \nu.
\end{equation}
Then from (\ref{5.16}), (\ref{5.18}), (\ref{6.23}) and the fact that $M_T$ is totally umbilical in $M$ (\cite{BISHOP},\cite{CHENCR1}), we conclude that $M_T$ is totally umbilical in $\bar{M}$. This completes the proof of the theorem.
\end{proof}
\begin{theorem}
  Let $M=M_2\times_f M_T$ be a warped product skew CR-submanifold of order $1$ of a Kenmotsu manifold $\bar{M}$ such that $\xi$ is tangential to $M_\perp$,
  where $M_2=M_\perp\times M_\theta$, then  the squared norm of the second fundamental form satisfies
\begin{equation}\label{6.24}
\|h\|^2\geq 2p[\|\boldsymbol{\nabla}^\bot \ln f\|^2-1+(\csc^2\theta+\cot^2\theta)\|\boldsymbol{\nabla}^\theta \ln f\|^2].
\end{equation}
If the equality of (\ref{6.24}) holds, then $M$ is totally geodesic and $M_T$ is totally umbilical in $\bar{M}$.
\end{theorem}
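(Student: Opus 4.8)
The plan is to run the argument of Theorem 6.1 essentially verbatim in structure, changing only the bookkeeping forced by moving the structure vector field from the slant factor to the anti-invariant factor. First I would fix an adapted orthonormal frame for $TM$ as in Section 6, but now with $\mathcal{D}^\perp$ carrying $\xi$: take $\{e_1,\dots,e_p,\phi e_1,\dots,\phi e_p\}$ for $\mathcal{D}^T$ (dimension $2p$), $\{e_1^*,\dots,e_{q-1}^*,e_q^*=\xi\}$ for $\mathcal{D}^\perp$ (dimension $q$), and $\{\hat e_1,\dots,\hat e_s,\sec\theta\,P\hat e_1,\dots,\sec\theta\,P\hat e_s\}$ for the now purely slant $\mathcal{D}^\theta$ (even dimension $2s$, containing no $\xi$). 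The associated normal frames are $\{\phi e_1^*,\dots,\phi e_{q-1}^*\}$ for $\phi\mathcal{D}^\perp$ — which has only $q-1$ members since $\phi\xi=0$ — together with the usual $\csc\theta$- and $\csc\theta\sec\theta$-normalised generators of $Q\mathcal{D}^\theta$, and an orthonormal frame of $\nu$.

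Second, I would expand $\|h\|^2$ via \eqref{2.8} and decompose it over the three tangential blocks and the three normal blocks exactly as in \eqref{6.2}--\eqref{6.3}. Proposition 4.1 kills the mixed $\mathcal{D}^T$--$M_2$ components lying in $\phi\mathcal{D}^\perp\oplus Q\mathcal{D}^\theta$, and, since the paper establishes no warped-product identity for components in $\nu$ or for the pure $M_2$--$M_2$ components, all of those terms are nonnegative and may be discarded. This reduces the estimate to the single block $\sum g(h(e_i,e_j),\phi e_r^*)^2+\sum g(h(e_i,e_j),\tilde e_r)^2$ with $e_i,e_j\in\mathcal{D}^T$, precisely as in \eqref{6.5}.

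Third, I would evaluate the two surviving sums. For the $\phi\mathcal{D}^\perp$ sum I would use parts (i) and (ii) of Corollary 4.1 to fold it onto the $\{e_i,\phi e_j\}$ pattern and then \eqref{4.11}; the upshot is $2p\sum_r\{\eta(e_r^*)-e_r^*\ln f\}^2$. Here is the one genuinely different point: because $\xi=e_q^*$ lies in $\mathcal{D}^\perp$, the $r=q$ summand is $\{\eta(\xi)-\xi\ln f\}^2=(1-1)^2=0$ by \eqref{4.3} (consistently, $\phi\xi=0$ removes that normal direction altogether), so this block equals $2p\bigl(\|\boldsymbol{\nabla}^\perp\ln f\|^2-1\bigr)$ — the extra $-1$ is exactly $(\xi\ln f)^2$ now subtracted from the anti-invariant gradient rather than from the slant gradient. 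For the $Q\mathcal{D}^\theta$ sum I would use parts (iii) and (iv) of Corollary 4.1 together with \eqref{4.12} and \eqref{4.13}; since $\mathcal{D}^\theta$ now contains no $\xi$ we have $\eta(\hat e_r)=0$ throughout, and the slant normalisation produces the clean factor $\csc^2\theta+\cot^2\theta$, giving $2p(\csc^2\theta+\cot^2\theta)\|\boldsymbol{\nabla}^\theta\ln f\|^2$ with no correction term. Adding the two contributions yields \eqref{6.24}.

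Finally, for the equality discussion I would trace which terms were discarded: forcing equality makes each dropped block vanish, so that $h(\mathcal{D}^\perp,\mathcal{D}^\perp)$, $h(\mathcal{D}^\perp,\mathcal{D}^\theta)$, $h(\mathcal{D}^\theta,\mathcal{D}^\theta)$, $h(\mathcal{D}^\perp,\mathcal{D}^T)$ and $h(\mathcal{D}^\theta,\mathcal{D}^T)$ all vanish (the last two via Proposition 4.1), whence $M_2$, already totally geodesic in $M$ as the base of a warped product, is totally geodesic in $\bar M$; and $h(\mathcal{D}^T,\mathcal{D}^T)\perp\nu$ combined with the umbilicity of $M_T$ in $M$ gives $M_T$ totally umbilical in $\bar M$. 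The only real obstacle is the $\xi$-bookkeeping in the third step: one must verify that relocating $\xi$ into $\mathcal{D}^\perp$ simultaneously moves the $(\xi\ln f)^2=1$ correction into the anti-invariant term and is consistent with the loss of one dimension in $\phi\mathcal{D}^\perp$ caused by $\phi\xi=0$; everything else is a faithful transcription of the computation in Theorem 6.1.
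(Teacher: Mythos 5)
Your proposal is correct and takes essentially the same approach as the paper: the paper's own proof of this theorem consists only of re-indexing the orthonormal frames so that $\xi$ sits in the anti-invariant factor (taking $\dim M_\perp = q+1$ and $\dim M_\theta = 2s$) and then declaring the rest ``similar as Theorem 6.1.'' Your write-up just makes explicit the bookkeeping the paper leaves implicit --- namely that $\phi\xi=0$ deletes one normal direction from $\phi\mathcal{D}^\perp$ and, via $\xi\ln f=1$, the correction term $-1$ moves from the slant gradient term to the anti-invariant gradient term --- and your equality-case discussion correctly mirrors that of Theorem 6.1.
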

\begin{proof}
For this theorem, we take $dim \ M_\perp=q+1$ and $dim \ M_\theta=2s$. So, orthonormal frames of $\mathcal{D}^\bot\oplus \{\xi\}$ and $\mathcal{D}^\theta$ will be
$\{e_{2p+1}=e_1^*,\cdots,e_{2p+q}=e_q^*,e_{2p+q+1}=\xi\}$\\ and
$\{e_{2p+q+2}=\hat{e}_1,\cdots,e_{2p+q+s+1}=\hat{e}_s, e_{2p+q+s+2}=\hat{e}_{s+1}=\sec\theta P\hat{e}_1,\\ \cdots,e_{2p+q+2s+1}=\hat{e}_{2s}=\sec\theta P\hat{e}_s\}$, respectively. Then the proof of the theorem is similar as Theorem 6.1.
\end{proof}
\noindent\textbf{Remark:} If we take $dim \ M_\theta=0$ in a warped product skew CR-submanifold $M=M_2\times_f M_T$ of a Kenmotsu manifold $\bar{M}$ such that
$M_2=M_\perp\times M_\theta$, then it turns into CR-warped product $M=M_\bot\times_fM_T$ which was studied in \cite{UAN}. Therefore, Theorem 5.1 and Theorem 6.2 are the generalizations of results of \cite{UAN} as follows:
\begin{corollary}\emph{(Theorem 3.1 of \cite{UAN})}
  A proper contact CR-submanifold of a Kenmotsu manifold $\bar{M}$ is locally a contact CR-warped product of the form $M_\perp\times_fM_T$ if and only if
\begin{equation*}
A_{\phi Z}X=\{\eta(Z)-(Z\mu)\}\phi X,
\end{equation*}
for every $X\in \Gamma(\mathcal{D}^T)$ and $Z\in \Gamma(\mathcal{D}^\bot\oplus\{\xi\})$, for some function $\mu$ on $M$ satisfying $(Y\mu)=0$ for any $Y\in \Gamma(\mathcal{D}^T)$.
\end{corollary}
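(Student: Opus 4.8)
The plan is to obtain this statement as the degenerate case $\dim M_\theta=0$ of Theorem 5.1, rather than reproving it from scratch; indeed the Remark preceding the corollary already signals exactly this reduction. First I would observe that, by the classification in Section 3, a proper contact CR-submanifold is precisely a proper skew CR-submanifold of order $1$ whose proper slant distribution is trivial, i.e. $\mathcal{D}^\theta=\{0\}$ (the case $k=0$, $\mathcal{D}^T\neq\{0\}$, $\mathcal{D}^\bot\neq\{0\}$). Under this hypothesis the tangent bundle collapses to $TM=\mathcal{D}^T\oplus\mathcal{D}^\bot\oplus\langle\xi\rangle$, and the base factor $M_2=M_\perp\times M_\theta$ reduces to $M_\perp$, so the warped product $M_2\times_f M_T$ of Theorem 5.1 becomes precisely the contact CR-warped product $M_\perp\times_f M_T$.

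With $\mathcal{D}^\theta=\{0\}$, condition (\ref{5.2a}) of Theorem 5.1, which involves only vectors $U\in\Gamma(\mathcal{D}^\theta)$, is vacuous and may be discarded; only (\ref{5.1}) and (\ref{5.3a}) survive. The crux of the reduction is to combine these two surviving conditions into the single equation stated in the corollary, now imposed for all $Z\in\Gamma(\mathcal{D}^\bot\oplus\langle\xi\rangle)$. For $Z\in\Gamma(\mathcal{D}^\bot)$ we have $\eta(Z)=0$, so the corollary's equation is exactly (\ref{5.1}). For $Z=\xi$ the left-hand side vanishes, since $\phi\xi=0$ yields $A_{\phi\xi}=A_0=0$, while the right-hand side becomes $\{\eta(\xi)-(\xi\mu)\}\phi X=(1-\xi\mu)\phi X$; because $\mathcal{D}^T$ is $\phi$-invariant we have $\phi X\neq 0$ for $X\neq 0$, so the equation at $Z=\xi$ holds if and only if $\xi\mu=1$, which is precisely (\ref{5.3a}). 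Hence the single displayed condition over $\mathcal{D}^\bot\oplus\langle\xi\rangle$ is equivalent to the conjunction of (\ref{5.1}) and (\ref{5.3a}).

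Once this equivalence is in place both directions follow at once from Theorem 5.1 applied with $\mathcal{D}^\theta=\{0\}$. If $M$ is locally a contact CR-warped product $M_\perp\times_f M_T$, it is in particular a warped product skew CR-submanifold of order $1$, so (\ref{5.1}) and (\ref{5.3a}) hold with $\mu=\ln f$, and therefore the combined condition holds. Conversely, the combined condition yields (\ref{5.1}) and (\ref{5.3a}), the empty family (\ref{5.2a}) is trivially satisfied, and Theorem 5.1 produces the local warped product structure, which is $M_\perp\times_f M_T$ precisely because the slant factor is absent. The function $\mu$ inherits the property $Y\mu=0$ for $Y\in\Gamma(\mathcal{D}^T)$ directly from Theorem 5.1.

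The step I expect to require the most care is the bookkeeping at $Z=\xi$: one must verify that $\xi$ genuinely lies in the anti-invariant factor $\mathcal{D}^\bot\oplus\langle\xi\rangle$ in this specialization, so that evaluating the single condition there is legitimate, and that the passage from $A_{\phi\xi}=0$ to $\xi\mu=1$ faithfully reproduces (\ref{5.3a}). Everything else is a direct transcription of Theorem 5.1 with the slant distribution set to zero.
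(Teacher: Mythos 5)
Your proposal is correct and is exactly the route the paper takes: the paper states this corollary, immediately after its Remark, as the specialization of Theorem 5.1 to $\dim M_\theta=0$, giving no further proof. Your verification that the single condition over $\mathcal{D}^\bot\oplus\{\xi\}$ faithfully merges (\ref{5.1}) and (\ref{5.3a}) (via $A_{\phi\xi}=0$ and $\phi X\neq 0$ for $0\neq X\in\Gamma(\mathcal{D}^T)$) is a detail the paper leaves implicit, so you have in fact supplied more justification than the text itself.
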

\begin{corollary}\emph{(Theorem 3.2 of \cite{UAN})}
Let $\bar{M}$ be a $(2m+1)$-dimensional Kenmotsu manifold and $M=M_\perp\times_fM_T$ an $n$-dimensional contact CR-warped product submanifold, such that $M_\perp$ is a $(q+1)$-dimensional anti-invariant submanifold tangent to $\xi$ and $M_T$ is a $2p$-dimensional invariant submanifold of $\bar{M}$, then
the squared norm of the second fundamental form of $M$ satisfies
\begin{equation}\label{6.25}
\|h\|^2\geq 2p[\|\boldsymbol{\nabla}^\bot \ln f\|^2-1]
\end{equation}
where $\boldsymbol{\nabla}^\bot \ln f$ is the gradient of $\ln f$.
 If the equality of (\ref{6.25}) holds, then $M_\perp$ is totally geodesic and $M_T$ is totally umbilical in $\bar{M}$.
\end{corollary}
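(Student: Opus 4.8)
The plan is to obtain this statement as the degenerate specialization of Theorem 6.2, in which the proper slant factor is suppressed. Concretely, I would set $\dim M_\theta = 0$, so that in the decomposition $M_2 = M_\perp \times M_\theta$ the slant distribution $\mathcal{D}^\theta$ collapses to $\{0\}$ and the warped product $M = M_2 \times_f M_T$ reduces to the contact CR-warped product $M = M_\perp \times_f M_T$. Since Theorem 6.2 already assumes $\xi$ tangential to $M_\perp$, the hypotheses match verbatim: $M_\perp$ is the $(q+1)$-dimensional anti-invariant factor tangent to $\xi$ and $M_T$ is the $2p$-dimensional invariant factor, exactly as required. This is the cleanest route because the Remark preceding this corollary already identifies Theorem 6.2 as the generalization of Theorem 3.2 of \cite{UAN}, so no independent computation should be necessary.

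With $\dim M_\theta = 0$ one has $s = 0$, hence every sum indexed by the slant frame $\{\hat{e}_1,\dots,\hat{e}_{2s}\}$ appearing in the proof of Theorem 6.2 is empty, and the gradient component $\boldsymbol{\nabla}^\theta \ln f$ vanishes identically. Substituting $\|\boldsymbol{\nabla}^\theta \ln f\|^2 = 0$ into inequality (\ref{6.24}) annihilates the term $(\csc^2\theta + \cot^2\theta)\|\boldsymbol{\nabla}^\theta \ln f\|^2$, and (\ref{6.24}) collapses precisely to
\begin{equation*}
\|h\|^2 \geq 2p[\|\boldsymbol{\nabla}^\bot \ln f\|^2 - 1],
\end{equation*}
which is the desired inequality (\ref{6.25}).

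For the equality case I would track the same reduction through the conclusion of Theorem 6.2. When $\dim M_\theta = 0$ the factor $M_2 = M_\perp \times M_\theta$ reduces to $M_\perp$ itself, so the assertion that $M_2$ is totally geodesic in $\bar{M}$ at equality becomes the assertion that $M_\perp$ is totally geodesic, while the conclusion that $M_T$ is totally umbilical carries over unchanged. This yields exactly the stated equality characterization.

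The step I expect to require the most care is not any new computation but the bookkeeping of the orthonormal frames in the degenerate case: one must check that deleting $\mathcal{D}^\theta$ leaves no orphaned contributions in the frame-by-frame expansions (\ref{6.3})--(\ref{6.4}), and in particular that the $Q\mathcal{D}^\theta$ summand of the normal bundle disappears cleanly so that the remaining normal directions are only $\phi\mathcal{D}^\bot$ and $\nu$. Once this reconciliation is verified, both the inequality and its equality case follow immediately from Theorem 6.2, and the proof is complete.
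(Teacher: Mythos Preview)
Your proposal is correct and mirrors exactly the paper's own argument: the corollary is obtained by taking $\dim M_\theta=0$ (i.e., $s=0$) in Theorem~6.2, as indicated in the Remark immediately preceding it, so that the slant sums are empty, the $\boldsymbol{\nabla}^\theta\ln f$ contribution drops out, and $M_2$ reduces to $M_\perp$ in the equality clause. No separate computation is given in the paper, and none is needed.
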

\noindent\textbf{Acknowledgement:} The first and third authors (SKH and JR) gratefully acknowledges to the SERB (Project No: EMR/2015/002302), Govt. of India for financial assistance of the work.
%%%%%%%%%%%%%%%%%%%%%%%%%%%%%%%%%%

%%%%%%%%%%%%%%%%%%%%%%%%%%%%%
\vspace{0.1in}
\noindent S. K. Hui$^1$, T. Pal$^2$ and J. Roy$^3$\\
Department of Mathematics, The University of Burdwan, Burdwan, 713104, West Bengal, India.\\
E-mail: skhui@math.buruniv.ac.in$^1$;\\
tanumoypalmath@gmail.com$^2$; joydeb.roy8@gmail.com$^3$\\
\end{document}